\documentclass[]{article}
\usepackage{graphicx}
\usepackage{amsfonts}
\usepackage{amsmath}
\usepackage{amssymb}
\usepackage{fancyhdr}
\usepackage{titlesec}
\usepackage{indentfirst}
\usepackage{booktabs}
\usepackage{verbatim}
\usepackage{color}
\usepackage{amsthm}
\usepackage[page,header]{appendix}
\usepackage{titletoc}

\newcommand{\rd}{\,\mathrm{d}}

\newcommand{\opF}{\mathcal{F}}
\newcommand{\opC}{\mathcal{C}}

\numberwithin{equation}{section}
\newtheorem{theorem}{Theorem}[section]
\newtheorem{lemma}[theorem]{Lemma}
\newtheorem{corollary}[theorem]{Corollary}

\newtheorem{remark}[theorem]{Remark}


\topmargin 0cm \oddsidemargin 0.66cm \evensidemargin 0.66cm
\textwidth 14.66cm \textheight 22.23cm

\headheight 0cm
\headsep 0cm

\begin{document}

\title{A study of Landau damping with random initial inputs
\footnote{This work was partially supported by NSF grants DMS-1522184 and DMS-1107291: RNMS KI-Net, and by
		the Office of the Vice Chancellor for Research and Graduate Education at the University of Wisconsin-Madison
		with funding from the Wisconsin Alumni Research Foundation.}}
\author{Ruiwen Shu\footnote{
		Department of Mathematics, University of Wisconsin-Madison, Madison, WI 53706, USA (rshu2@wisc.edu).}, Shi Jin\footnote{
		Department of Mathematics, University of Wisconsin-Madison, Madison, WI 53706, USA (sjin@wisc.edu).}}
\maketitle

\begin{abstract}
For the Vlasov-Poisson equation with random uncertain initial data, we prove that the Landau damping solution given by the deterministic counterpart (Caglioti and Maffei, {\it J. Stat. Phys.}, 92:301-323, 1998) depends smoothly on the random variable if the time asymptotic profile does, under the smoothness and smallness assumptions similar to the deterministic case. The main idea is to generalize the deterministic contraction argument to more complicated function spaces to estimate derivatives in space, velocity and random variables. This result suggests that the random space regularity can persist in long-time even in time-reversible nonlinear kinetic equations.
\end{abstract}

\section{Introduction}

In this paper we are concerned with the Vlasov-Poisson (VP) equation, which is a widely used model in plasma physics~\cite{LP, KT}. The VP equation reads
\begin{equation}\label{VP0}
\partial_t f + v\cdot\nabla_x f + E\cdot\nabla_v f = 0,\quad  E = \nabla_x \Delta_x^{-1}(\rho-\rho_0),\quad \rho = \int f \rd{v},
\end{equation}
with initial data $f(x,v,0)=f_{in}(x,v)$, where $t\in \mathbb{R}^+$ is the time variable, $x\in \mathbb{T}^d=[0,2\pi]^d$ is the space variable, and $v\in \mathbb{R}^d$ is the velocity variable. $f=f(x,v,t)$ is the particle distribution function of electrons. The term $E\cdot\nabla_v f$ represents the effect of the self-consistent electric field $E=E(x,t)$ on the electrons. $\rho=\rho(x,t)$ is the local density of electrons, while the constant $\rho_0 =\int\int f_{in}\rd{v}\rd{x}$ is the background charge (from ions) satisfying the neutrality condition.

Landau damping, first discovered by Landau~\cite{Lan} in 1946 in the linearized setting, is one of the most famous physical phenomena for the VP equation. It says that given the initial data close enough to some spatial homogeneous equilibrium, i.e., $f_{in}(x,v) = f^0(v) + h(x,v)$ where $h$ is small, then the electric field decays exponentially in time, if $f^0(v)$ satisfies certain conditions. Since the discovery of Landau damping, there has been a few work at the linearized level~\cite{Deg,MF,GS1,GS2}, but the first nonlinear result was obtained by Caglioti and Maffei~\cite{CM} in 1998 (and later improved by Hwang and Vel\'azquez~\cite{HV}). Using the scattering approach, Caglioti-Maffei proved that there exists a class of analytic initial data such that Landau damping does happen, by a fixed-point argument under the perturbative setting. Then in 2011 Mouhot and Villani~\cite{MV} proved Landau damping for all small initial data. In the same year Lin and Zeng~\cite{LZ1,LZ2} proved that Landau damping is not true for Sobolev initial data with low regularity index. After that Bedrossian, Masmoudi and Mouhot~\cite{BMM} generalized this result to solutions with Gevrey regularity. We refer to~\cite{MV} for a thorough review of the history of Landau damping, and~\cite{BMM} for recent progress.

All previous results are concerned with deterministic initial data, but in reality the initial data come from experiments, thus may have uncertainty due to measurement error. It is important to quantify the uncertainty, i.e., understand how the uncertainty propagates, and how the uncertainty affects the solution for large time. For Landau damping phenomena, uncertainty quantification (UQ) may help us make reliable predictions for the large time behavior of the solution.

To model the uncertainty, we introduce a random variable $z$ in a random space $I_z\subset \mathbb{R}^{d_z}$ with probability distribution $\pi(z)\rd{z}$, where $d_z$ is the dimension of the random space. Then the uncertainty from initial data is modeled by taking the initial data $f_{in}=f_{in}(x,v,z)$ to be $z$-dependent. Then we still consider the same equation (\ref{VP0}), with $f$ and $E$ depending on the extra random variable $z$. Now the problem becomes to investigate the $z$-dependence of $f(x,v,t,z)$ and $E(x,t,z)$ for large $t$, given a $z$-dependent initial data satisfying the conditions for Landau damping.

One of the important tasks in UQ is to study the sensitivity of the solution on the random inputs. With estimates for $z$-derivatives of $f$ and $E$, one can understand whether the solution is sensitive to the random perturbations, as well as help prove the spectral accuracy for generalized polynomial chaos (gPC) based numerical methods~\cite{XiuK,Xiu}, including stochastic collocation and stochastic Galerkin methods. Recently there has been a rapid progress in studying the random space regularity and spectral accuracy of numerical methods for kinetic equations, for both linear equations~\cite{JLM, LW,JL,Liu} and nonlinear equations~\cite{JZ,SJ,LJ} with random uncertainties. However, all of these works are based on energy estimates, taking advantage of hypocoercivity of the linearized kinetic operators (linearized collision operators, Fokker-Planck operators, etc.). Nonlinear terms are controlled by the hypocoercive terms together with the assumption that the initial data is near the global equilibrium. For the VP equation (\ref{VP0}), due to the time-reversibility, there cannot be a dissipative energy estimate. Therefore one has to go out of the framework of energy estimate in order to study the impact of random uncertainty.

This work is a first attempt towards the study of random uncertainty for the VP equation and Landau damping. Our analysis is based on the framework of~\cite{CM}. For simplicity, we will assume each of $x,v,z$ is one-dimensional. Denoting $f^*(x,v)$ as a prescribed time-asymptotic profile, \cite{CM} proved that there exists initial data such that the solution $f(t,x,v)$ satisfies $\lim_{t\rightarrow\infty} \|f(t,x,v)-f^*(x-vt,v)\|=0$ (in some sense), under smoothness and smallness assumptions on $f^*$. In other words, the solution $f(t,x,v)$ behaves like the free transport of $f^*(x,v)$ for large time. Now we assume that $f^*=f^*(x,v,z)$ also depends on $z$.  By generalizing the fixed-point argument in~\cite{CM} to estimate the derivatives of $E$ with respect to $x,v,z$, we are able to prove: if we denote $f(t,x,v,z)$ as the Landau damping solution with $\lim_{t\rightarrow\infty} \|f(t,x,v,z)-f^*(x-vt,v,z)\|=0$ obtained in~\cite{CM}, and assuming $f^*$ has smooth dependence on $z$, then $f(t_0,x,v,z)$ depends smoothly on $z$ if $t_0$ is large enough. Although not being able to prove the $z$-regularity of the solution for all small enough initial data, our result shows that there exists a class of initial data depending smoothly on $z$, such that this regularity is maintained for all time.

It is desirable to have a class of Landau damping solutions with uncertainty, such that the $z$-derivative of the solution of any order is controlled. Therefore one would desire the smallness condition on the initial data independent of $K$, the order of $z$-derivative. However, a direct extension of the contraction argument in~\cite{CM} will require the smallness condition on $f^*$ depending on $K$. To overcome this difficulty, we let $t_0$, the time when the estimate starts to work, increase with $K$. With $t_0=t_0(K)$, all other parameters appeared in the smallness condition can be made independent of $K$. This means that for a class of $f^*$, we can estimate the $z$-derivative of the solution of any order, but the estimates start to work at later time for higher order derivatives. This is less restrictive than the requirement of $f^*$ depending on $K$, since it is natural to expect an extension of a local-in-time estimate~\cite{Ior} to handle the time period $[0,t_0]$ (but this is out of the scope of this paper).

This paper is organized as follows: in Section 2 we introduce notations and our main result. In Section 3 we introduce some necessary lemmas, including some from \cite{CM} and some new ones. In Section 4 we estimate the $x,v$-derivatives of the particle trajectory $X(x,v,t),V(x,v,t)$, and $x$-derivatives of the electric field $E$. In Section 5 we conduct estimates for the $z$-derivatives of $X$ and $E$. In Section 6 we prove the $z$-regularity of the particle distribution $f$. The paper is concluded in Section 7.

\section{Notations and the main result}

From now on we will consider the VP equation (\ref{VP0}) with one-dimensional $x,v,z$, which can be written as:
\begin{equation}\label{VP}
\partial_t f + v\partial_x f + E\partial_v f = 0,\quad  \partial_x E = \rho-\rho_0,\quad \rho = \int f \rd{v}.
\end{equation}

\subsection{Notations}

Fix $a\ge 0$, $t_0>0$. For a function $F$ with variables $t$, $(x,t)$ or $(x,v,t)$, denote
\begin{equation}
\|F\|_{a,t_0} = \sup_{t\ge t_0}e^{at}\|F(\cdot,t)\|_{L^\infty},
\end{equation}
where the $L^\infty$ norm is taken over all variables except $t$, and the corresponding space of functions $L^\infty_{a,t_0}$. The continuous functions in $L^\infty_{a,t_0}$ form a closed subset, denoted by $\opC_{a,t_0}$. 

We also define $\opC_{a,t_0,k}$ by the norm 
\begin{equation}\label{at0k}
\|F\|_{a,t_0,k}= \sup_{t\ge t_0}t^{-k}e^{at}\|F(\cdot,t)\|_{L^\infty},
\end{equation}
for positive integers $k$, and
\begin{equation}
\|F\|_{L^\infty_k} = \|F\|_{0,t_0,k},
\end{equation}
for positive integers $k$.

\subsection{Summary of the main result in~\cite{CM}}
\cite{CM} considers the deterministic 1d VP equation (\ref{VP}) (without $z$-dependence). Given a time-asymptotic profile $f^*(x,v)$, their goal is to find a solution $f(t,x,v)$ to (\ref{VP}) such that $\lim_{t\rightarrow\infty} \|f(x,v,t)-f^*(x-vt,v)\|=0$ (in some sense). Since $g(x,v,t) = f^*(x-vt,v)$ satisfies the free transport equation $\partial_t g + v\partial_x g = 0$ with initial data $g(\cdot,\cdot,0) = f^*$, this goal is to say that for large time, the solution $f(x,v,t)$ behaves like the solution to the free transport equation with initial data $f^*$. In other words, this specific solution exhibits the behavior of Landau damping.

They assume $f^*$ satisfying
\begin{itemize}
\item ({\bf Smoothness})($a_1$): $\hat{f}^*(k_x,k_v) \le \frac{a_1}{1+k_x^2}e^{-a|k_v|}$,
\item ({\bf Decay})($a_2$): $|f^*(x,v)| \le \frac{a_2}{1+v^4}$,
\end{itemize}
for some positive constants $a,a_1,a_2$, with $\hat{f}^*$ being the Fourier transform in both $x$ and $v$:
\begin{equation}
\hat{f}^*(k_x,k_v) = \frac{1}{2\pi} \int \int f^*(x,v) e^{i(k_x x + k_v v)}\rd{v}\rd{x}.
\end{equation}
The first assumption basically says that $f^*$ is analytic in $v$, and second-order differentiable in $x$. For example, it is straightforward to see that $f^*(x,v) = f_1(x)e^{-v^2}$ satisfies this assumption if $f_1$ is small enough in $H^2_x$. Note that this $f^*$ also satisfies the second assumption.

\cite{CM} assumes the following conditions on the constants:
\begin{itemize}
\item $a \ge 15\sqrt{a_2}$,
\item $t_0 \ge \max(0,\frac{\log(8a_1)}{a})$,
\end{itemize}
and showed that for $f^*(x,v)$ satisfying the above conditions, there exists initial data at $t=t_0$ such that the solution to (\ref{VP}) satisfies 
\begin{equation}\label{fst}
f(x,v,t)\approx f^*(x-vt,v),
\end{equation}
for large $t$. The key idea is the following map $\opF$ (Lemma 3.1 in \cite{CM}), which maps a field $F(x,t)$ with $\|F\|_{a,t_0}e^{-at_0}\le a$ and satisfying the Lipschitz condition into:
\begin{itemize}
\item Define the particle trajectory $X=X(x,v,t),\,V=V(x,v,t)$ by
\begin{equation}\label{traj}
\begin{split}
\dot{X} & = V, \\
\dot{V} & = F(X,t), \\
\lim_{t\rightarrow\infty} X-Vt & = x, \\
\lim_{t\rightarrow\infty} V &  = v.
\end{split}
\end{equation}
The Hamiltonian map from $(x,v)$ to $(X,V)$ is denoted as $\Phi_t$.
\item Define $f$ by 
\begin{equation}\label{deff}
f(X,V,t) = f^*(x,v),
\end{equation}
 i.e., $f$ solves the Liouville equation
\begin{equation}
\partial_t f + v\partial_x f + F\partial_v f = 0.
\end{equation}
\item Define $\opF(F)$ by $\partial_x \opF(F) = \rho-\rho_0$, with $\rho = \int f \rd{v}$. In other words,
\begin{equation}
\opF(F)(x,t) = \int B(x-y)\rho(y,t) \rd{y},
\end{equation}
with the convolution kernel $B$ given by
\begin{equation}
B(x)=\frac{1}{2}-\frac{x}{2\pi}\quad \text{for}\quad x\in [0,2\pi),\quad B(x+2\pi)=B(x).
\end{equation}
\end{itemize}
Then to solve (\ref{VP}) with the asymptotic limit (\ref{fst}) is equivalent to finding a fixed point of $\opF$.

\cite{CM} proved that $\opF$ maps the set with small $\opC_{a,t_0}$ norm and the Lipschitz condition (which is a closed subset of $\opC_{a,t_0}$) into itself, and is contractive. This provides a fixed point of $\opF$. From now on, we will denote $E$ as the unique fixed point of $\opF$. When we consider (\ref{VP}) with $z$-dependence, the result of \cite{CM} can be applied for each fixed $z$, i.e., if $f^*$ depends on $z$, then $E$ also depends on $z$.

\subsection{Our main result}

Fix a positive integer $K$, and we will estimate $\partial_z^K E$. First we assume that $f^*$ satisfies ({\bf Smoothness})($a_1$) and ({\bf Decay})($a_2$) for each fixed $z$, with $\nabla_{x,v} f^*$ satisfying ({\bf Decay})($a_2$). We further assume that all the $x,v,z$-derivatives of $f^*$ up to total order $K$ satisfy ({\bf Smoothness})($C$) and ({\bf Decay})($C$) for some constant $C$. For example, it is straightforward to see that $f^*(x,v,z) = f_1(x,z)e^{-v^2}$ satisfies these assumptions if $f_1$ is in $H^{K+2}_{x,z}$, and is small enough in $H^2_x$ for each fixed $z$.

Next we assume the constants $a,a_1,a_2,t_0$ satisfying the following conditions:
\begin{itemize}
\item ({\bf A1}) $a \ge \max\{1,15\sqrt{a_2}\}$,
\item ({\bf A2}) $t_0 \ge \max\{2,4K,\frac{1}{a}\log(8a_1)\}$,
\item ({\bf A3}) $\frac{50C_E}{a} (3/a)^3e^{-3} \le 1$, which implies $\frac{50C_E}{a} t_0^3e^{-at_0} \le 1$. Here $C_E=\frac{240a_1a_2}{a} + 4a_1$,
\item ({\bf A4}) $8e \le \frac{1}{20a_2}$,
\item ({\bf A5}) $8C_E \le a^2$.
\end{itemize}
These conditions are clearly satisfied if one first chooses $a_1$ and $a_2$ small enough, then $a$ and $t_0$ large enough.

Then we have
\begin{theorem}\label{thm}
Under the aforementioned assumptions ({\bf A1})-({\bf A5}) and the ({\bf Smoothness}) and ({\bf Decay}) of $f^*$ and its derivatives, $E(z)$, the fixed point of $\opF$ given by \cite{CM}, satisfies the estimate
\begin{equation}
\|\partial_z^k E\|_{a,t_0} \le C,\quad 0\le k \le K.
\end{equation}
\end{theorem}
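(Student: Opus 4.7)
The plan is to prove the estimate by induction on $k$. The base case $k=0$ is exactly the Caglioti--Maffei bound on the fixed point $E$, applied pointwise in $z$; this is legitimate because (\textbf{A1})--(\textbf{A2}) reduce to the hypotheses of \cite{CM} uniformly in $z$, so the fixed point $E(z)$ exists in $\opC_{a,t_0}$ for each $z$ with a bound independent of $z$. For the inductive step I would differentiate the fixed-point identity $E = \opF(E)$ in $z$ exactly $k$ times. Since $\opF$ is built by convolving $B$ against $\rho$, where $\rho$ is obtained from $f(X,V,t)=f^*(x,v,z)$ and the trajectory $X(x,v,t,z),V(x,v,t,z)$ defined by (\ref{traj}) depends on $z$ both explicitly (through $f^*$ via the Liouville transport, and through the asymptotic identification) and implicitly through the field $E(z)$, the chain rule (Fa\`{a} di Bruno) produces an identity of the schematic form
\begin{equation*}
(I - D_F\opF(E))\,\partial_z^k E \;=\; R_k,
\end{equation*}
where $D_F\opF(E)$ is the Fr\'echet derivative of $\opF$ in its field argument, evaluated at the fixed point, and $R_k$ is a finite sum of terms each involving at most $k-1$ $z$-derivatives of $E$, at most $k$ $z$-derivatives of $f^*$, and various $x,v$- and $z$-derivatives of $X,V$ of strictly lower total $z$-order.

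Next I would show that $D_F\opF(E)$ is a strict contraction on $\opC_{a,t_0}$. This is essentially the computation by which \cite{CM} established their fixed point: the Lipschitz constant of $\opF$ on the ball is automatically an upper bound for the operator norm of $D_F\opF(E)$, and condition (\textbf{A5}) $8C_E \le a^2$ is designed precisely to force this constant strictly below $1$ uniformly in $k$. Consequently $(I-D_F\opF(E))$ is invertible on $\opC_{a,t_0}$ with operator norm bounded independently of $k$, giving
\begin{equation*}
\|\partial_z^k E\|_{a,t_0} \;\le\; C\,\|R_k\|_{a,t_0},
\end{equation*}
and the induction will close as soon as we bound $\|R_k\|_{a,t_0}$ by a constant depending only on $f^*$ and the data.

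To control $R_k$ I would expand the chain rule according to partitions of the $z$-derivatives among the factors. Each factor $\partial_z^j X$ (and $\partial_z^j V$) satisfies a linear ODE obtained by differentiating (\ref{traj}) in $z$; these ODEs are solved backward from $t=\infty$ and inevitably accumulate polynomial factors of $t$ (roughly one power of $t$ per $\partial_v$ or $\partial_z$ on the trajectory). This is exactly the purpose of the weighted spaces $\opC_{a,t_0,k}$ introduced in (\ref{at0k}): a term with $t^j$-growth naturally lives in $\opC_{a,t_0,j}$, and conversion back to $\opC_{a,t_0}$ costs $\sup_{t\ge t_0} t^j e^{-at/2}$, which is uniformly small thanks to (\textbf{A2}) $t_0 \ge 4K$ together with $a\ge 1$. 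Thus the Section~4 and Section~5 estimates would provide $\opC_{a,t_0,k}$-type bounds on $\partial_z^j X,\partial_z^j V,\partial_z^j E$ for $j<k$ that, when inserted into $R_k$, yield a bound in $\opC_{a,t_0}$ of size $O(1)$ in $k$.

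The hard part, I expect, is the combinatorial bookkeeping of the Fa\`{a} di Bruno expansion, so that (i) every term in $R_k$ genuinely involves strictly fewer than $k$ $z$-derivatives of $E$ (and hence is covered by the inductive hypothesis), and (ii) the total polynomial-in-$t$ growth produced by nested chain rules is tracked sharply enough that the requirement $t_0 \ge 4K$ is actually sufficient. Condition (\textbf{A4}) $8e\le 1/(20a_2)$ looks tailored to absorb the geometric series obtained when summing over all partitions appearing in Fa\`{a} di Bruno, thereby keeping the final constant $C$ genuinely independent of $k$ rather than blowing up like $k!$. Once this exponential-versus-polynomial accounting is carried out, the contraction property of $D_F\opF(E)$ closes the induction with essentially no further work.
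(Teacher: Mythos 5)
Your overall skeleton (induct on $k$, differentiate the fixed-point relation $E=\opF(E)$, isolate the top-order term and beat it with a contraction, push the lower-order terms into weighted spaces $\opC_{a,t_0,k}$) matches the paper's strategy, but the inductive statement you propose is too weak to close. The remainder $R_k$ contains, via Lemma \ref{lem_dzB}, terms of the form $\int \partial_x^\alpha B(y-X)\prod_j \partial_z^{\gamma_j+1}X\,\partial_z^\beta f^*\rd{x}\rd{v}$ with $\alpha\ge 1$; since $\partial_x^\alpha B$ for $\alpha\ge 2$ is a derivative of a delta function, the only way to estimate these is to integrate by parts in $X$, and by Lemma \ref{lem_dXx} this produces \emph{mixed} derivatives $\partial_x^{i}\partial_v^{j}\partial_z^{k'}X$ with $i+j\ge 1$. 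These are not pure $z$-derivatives "of strictly lower total $z$-order" controllable by your induction hypothesis or by the $z$-independent Section 4 bounds: you must strengthen the induction to the pair of statements (\ref{induc2_1})--(\ref{induc2_2}), i.e.\ carry $\|\partial_x^i\partial_v^j\partial_z^k X\|_{a,t_0,i+2j}$ and $\|\partial_x^l\partial_z^k E\|_{a,t_0,l}$ for all $i+j+k\le K$, with a nested inner induction on $l=i+j$ at each fixed $k$. That inner induction has its own self-interacting term (the sum over $i+j=l$ in (\ref{final0})), whose coefficient $8A^l$ must be kept below $\frac{1}{20a_2}$; this, not a "geometric series over Fa\`a di Bruno partitions," is the actual role of ({\bf A3}) and ({\bf A4}): ({\bf A3}) forces $A^l\le e^{l\delta}\le e$ with $\delta=\frac{50C_E}{a}t_0^2e^{-at_0}$ and $l\le K\le t_0$, and ({\bf A4}) then gives $8A^l\le 8e\le\frac{1}{20a_2}$. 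Likewise ({\bf A5}) is not what makes $D\opF$ contractive (that is ({\bf A1}), via $\frac{80a_2}{a^2}\le\frac12$ in Lemma \ref{lem_contra}); it is used to bound $\|\partial X/\partial x\|_{L^\infty}\le 2$, etc.

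A second, smaller problem: your conversion of a $\opC_{a,t_0,j}$ term back to $\opC_{a,t_0}$ at the cost of $\sup_{t\ge t_0}t^je^{-at/2}$ sacrifices half the exponential rate, so it would only yield the theorem with $a$ replaced by $a/2$. The paper avoids this loss by never converting a single weighted factor alone: the nonlinear estimate (Lemma \ref{lem_nl}) is applied to products containing at least two factors in spaces $\opC_{a,t_0,k_i}$, so the spare factor of $e^{-at}$ absorbs the polynomial growth while the full rate $a$ survives in the product. Your proposal is a reasonable first pass at the architecture, but without the mixed-derivative bookkeeping and the inner induction on $l$ the argument does not close.
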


\begin{corollary}\label{cor}
Under the same assumptions, $f(x,v,t,z)$, the Landau damping solution given by \cite{CM}, satisfies
\begin{equation}
\|\partial_z^k[f(x,v,t,z)-f^*(x-vt,v,z)]\|_{a,t_0,1}\le C,\quad 0\le k \le K.
\end{equation}
\end{corollary}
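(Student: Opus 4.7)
The plan is to recast the difference $f(x,v,t,z) - f^*(x-vt,v,z)$ as the displacement of the inverse Hamiltonian flow $\Phi_t^{-1}$ from free transport, then combine the bound of Theorem~\ref{thm} with the trajectory estimates of Sections~4--5 by chain-rule expansions. Setting $(\xi,\eta):=\Phi_t^{-1}(x,v)$, the definition~(\ref{deff}) gives $f(x,v,t,z)=f^*(\xi,\eta,z)$, so what is to be estimated is $\partial_z^k[f^*(\xi,\eta,z)-f^*(x-vt,v,z)]$. Integrating the trajectory ODEs~(\ref{traj}) backward from $+\infty$ against the weights $1$ and $s$ and using the asymptotic boundary conditions yields the explicit identities
\begin{equation*}
\eta-v=\int_t^\infty E(X(\xi,\eta,s),s,z)\rd s,\qquad
\xi-(x-vt)=-\int_t^\infty s\,E(X(\xi,\eta,s),s,z)\rd s.
\end{equation*}

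Next I would apply $\partial_z^j$ for each $0\le j\le K$ to these identities. By Fa\`a di Bruno the result is a finite sum of integrals of the shape $\int_t^\infty (1\text{ or }s)(\partial_x^\alpha\partial_z^\beta E)(X,s,z)\prod_i\partial_z^{j_i}X(\xi,\eta,s,z)\rd s$. Every $\partial_z^j E$ factor is $O(e^{-as})$ by Theorem~\ref{thm}; every $\partial_z^j X$ is controlled by the parallel bounds of Sections~4--5; and $s^m e^{-as}\in L^1((t,\infty))$ has $L^1$-norm of size $O(t^m e^{-at})$ once $t\ge m/a$, which is guaranteed by~({\bf A2}). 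Combining gives
\[
\|\partial_z^j(\xi-(x-vt))\|_{L^\infty_{x,v,z}}\le Cte^{-at},\qquad \|\partial_z^j(\eta-v)\|_{L^\infty_{x,v,z}}\le Ce^{-at},\qquad 0\le j\le K.
\]

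Finally, I would expand $\partial_z^k[f^*(\xi,\eta,z)-f^*(x-vt,v,z)]$ once more by the multivariate Fa\`a di Bruno formula and isolate the diagonal contribution $(\partial_z^k f^*)(\xi,\eta,z)-(\partial_z^k f^*)(x-vt,v,z)$. The mean value theorem bounds this by $\sup|\nabla_{x,v}\partial_z^k f^*|\cdot(|\xi-(x-vt)|+|\eta-v|)=O(te^{-at})$, the supremum being finite by the ({\bf Smoothness})$(C)$ and ({\bf Decay})$(C)$ hypotheses on all derivatives of $f^*$ up to total order $K$. Every remaining off-diagonal term carries at least one factor $\partial_z^{j_i}\xi$ or $\partial_z^{j_i}\eta$ with $j_i\ge 1$, which is $O(te^{-at})$ by the previous step, multiplied by a bounded derivative of $f^*$. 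Summing yields $|\partial_z^k[f-f^*(x-vt,v)]|\le Cte^{-at}$ for $t\ge t_0$, which is precisely $\|\cdot\|_{a,t_0,1}\le C$.

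The main difficulty I anticipate is bookkeeping rather than conceptual. One must track the combinatorial blow-up of Fa\`a di Bruno in both layers of the argument, and verify that the polynomial factors $t^m$ generated by repeated differentiation inside $-\int_t^\infty s E\rd s$ are absorbed by $e^{-at}$. This is exactly the role of the $K$-dependent lower bound $t_0\ge 4K$ in~({\bf A2})---the ``$t_0=t_0(K)$'' trade-off flagged in the introduction---and is what prevents the smallness requirement on $f^*$ from itself having to depend on $K$.
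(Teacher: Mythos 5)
Your overall strategy --- rewriting $f(x,v,t,z)-f^*(x-vt,v,z)$ as $f^*(\xi,\eta,z)-f^*(x-vt,v,z)$ with $(\xi,\eta)=\Phi_t^{-1}(x,v)$ and then using the mean value theorem on the diagonal term --- is viable, and your two identities for $\eta-v$ and $\xi-(x-vt)$ are correct. But there is a genuine gap in the middle step, where you claim that applying $\partial_z^j$ to those identities produces only terms of the shape $\int_t^\infty(1\text{ or }s)(\partial_x^\alpha\partial_z^\beta E)(X,s,z)\prod_i\partial_z^{j_i}X\,\rd s$ ``controlled by the parallel bounds of Sections 4--5.'' The bounds of Sections 4--5 control $z$-derivatives of the \emph{forward} flow at a \emph{fixed label} $(x,v)$. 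What you need here are $z$-derivatives of the \emph{inverse} flow at a fixed Eulerian point: in your identities the arguments $(\xi,\eta)$ of $X(\xi,\eta,s,z)$ themselves depend on $z$, so differentiating in $z$ at fixed $(x,v)$ produces, in addition to the explicit terms you list, contributions like $\partial_\xi X\cdot\partial_z\xi+\partial_\eta X\cdot\partial_z\eta$, where $\partial_\eta X\sim s$ is large and $\partial_z\xi,\partial_z\eta$ are exactly the unknowns you are trying to bound. The equations are therefore implicit, and at each order $j$ you must either run another contraction/implicit-function argument (checking that the coefficients of the unknowns, e.g.\ $\int_t^\infty s\,\partial_xE\,\partial_\eta X\,\rd s=O(t^3e^{-at})$, are small), or invert the Jacobian $D\Phi_t$ explicitly, e.g.\ $\partial_z\xi=-(\partial_vV\,\partial_zX-\partial_vX\,\partial_zV)$ using $\det D\Phi_t=1$. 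Either fix forces you to control $\partial_x^i\partial_v^j\partial_z^kV$ as well, which your proposal never addresses; note that $\partial_vX\sim t$ is only compensated because $\partial_zV=O(e^{-at})$, so this is not a cosmetic omission.

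The paper sidesteps the inverse flow entirely by staying in label coordinates: it differentiates the identity $f(X(z),V(z),t,z)=f^*(x,v,z)$ at fixed label $(x,v)$, first establishing $\partial_x^i\partial_v^j\partial_z^kV\in\opC_{a,t_0,i+2j}$, then showing by induction that every ``convective'' term in the Fa\`a di Bruno expansion carries at least one $z$-derivative of $X$ or $V$ (hence lies in $\opC_{a,t_0}$ or, paired with an $L^\infty_1$ factor, in $\opC_{a,t_0,1}$), which yields $\partial_z^kf(X,V,t,z)-\partial_z^kf^*(x,v,z)\in\opC_{a,t_0,1}$; since this is an $L^\infty$ bound and $\Phi_t$ is a bijection, it is equivalent to the Eulerian statement. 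Your final step (the mean value theorem with $|\xi-(x-vt)|\le Cte^{-at}$, $|\eta-v|\le Ce^{-at}$) coincides with the paper's last step. So the route can be completed, but as written the key quantitative claim $\|\partial_z^j(\xi-(x-vt))\|_{L^\infty}\le Cte^{-at}$, $\|\partial_z^j(\eta-v)\|_{L^\infty}\le Ce^{-at}$ for $j\ge1$ is asserted rather than proved, and the estimates it is attributed to do not directly apply.
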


Theorem \ref{thm} means that when time is large, the electric field of the Landau damping solution given by \cite{CM} is insensitive to the random perturbation on the initial data. Corollary \ref{cor} means that when time is large, the $z$-dependence of the particle distribution $f$ is dominated by the $z$-dependence of the time-asymptotic profile $f^*$ and insensitive to the uncertainty propagated from the electric field.

To prove Theorem \ref{thm}, we will use induction on $k$. The case $k=1$ can be proved by simply adopting estimates for the first order $x,v$-derivatives of $X,V$ and $x$-derivative of $E$. For larger $k$, we have to involve higher order $x,v$-derivatives to close the estimate. Therefore we start by estimating the higher order $x,v$-derivatives of $X,V$ and $x$-derivatives of $E$. It is important to adopt the correct norms in this part ((\ref{at0k}) with proper index $k$), since taking $x,v$-derivatives will deteriorate the time decay by polynomial orders. The simplest case is already noted in \cite{CM}, which proved that $\|\partial_x E\|_{b,t_0} \le C$ for all $b<a$. 

%
Our method is based on the fixed-point argument from \cite{CM}. However, one has to pay attention to the following facts:
\begin{enumerate}
\item When taking $x,v$-derivatives, the 'self-interacting' term contains more and more terms as the order of derivative increase, which makes it harder to have norm less than 1/2. In order to make $a,a_1,a_2$ independent of $K$ (which means that the initial data does not shrink to zero as $K\rightarrow\infty$), we let $t_0$ depends on $K$, see ({\bf A2}), which means that for higher order $z$-derivatives, our estimate starts to work at later time. This is less restrictive than the requirement of $a,a_1,a_2$ depending on $K$, since one may extend the estimate to earlier time based on local-in-time estimates. For example, one can estimate $\partial_z^k E$ based on the existence theorem for the deterministic VP equation~\cite{Ior}, but this is out of the scope of this paper.
\item The self-interacting term contains $x,v$-derivatives of order no more than ONE. As a result, all the higher order $x,v$-derivatives and $z$-derivatives of $f^*$ are only required to satisfy ({\bf Smoothness}) and ({\bf Decay}), with arbitrarily large constants.
\end{enumerate}

\section{Preliminaries}

\subsection{Summary of intermediate results from \cite{CM}}
Apart from the contraction property of $\opF$, we will use some intermediate results from \cite{CM}. We first list a few estimates in \cite{CM}:
\begin{itemize}
\item $\|E\|_{a,t_0} \le 8a_1$,
\item $\|\partial_x E\|_{L^\infty} \le 20a_2$ (a consequence of (\ref{rhoest1}) below),
\item $\|\partial_x E\|_{a,t_0,1} \le C_E$ (a consequence of (\ref{rhoest2}) below, with $C_E=\frac{240a_1a_2}{a} + 4a_1$ given explicitly).
\end{itemize}
We remark that \cite{CM} claims $\|E\|_{a,t_0} \le 8a_1a_2$, but there is a calculation error on page 319 of \cite{CM}. The correct estimate for $\opF(0)(x,t)$ goes as
\begin{equation}
\begin{split}
|\opF(0)(x,t) | = & \left| \sum_{k\ne 0} \frac{\hat{f}^*(k,kt)e^{ikx}}{ik} \right| \le \sum_{k\ne 0} \frac{|\hat{f}^*(k,kt)|}{|k|} \le \sum_{k\ne 0} \frac{a_1}{(1+k^2)|k|}e^{-a|k|t} \\ \le &  e^{-at}\sum_{k\ne 0} \frac{a_1}{(1+k^2)|k|} \le 4a_1e^{-at},
\end{split}
\end{equation}
and then the contraction property of $\opF$ implies $\|E\|_{a,t_0} \le 8a_1$.

\begin{lemma}
\begin{equation}\label{varchange}
\int \phi(y,u)f(y,u,t) \rd{y}\rd{u} = \int \phi(X(x,v,t),V(x,v,t))f^*(x,v)\rd{x}\rd{v}.
\end{equation}
In particular,
\begin{equation}\label{F}
E(y,t) = \int B(y-X(x,v,t))f^*(x,v)\rd{x}\rd{v}.
\end{equation}
\end{lemma}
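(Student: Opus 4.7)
The plan is to combine two elementary facts: that the Hamiltonian flow $\Phi_t$ defined by (\ref{traj}) is volume-preserving, and that $f$ is transported along characteristics via $f(X(x,v,t),V(x,v,t),t) = f^*(x,v)$ from (\ref{deff}). Granted these two ingredients, (\ref{varchange}) is simply a change of variables, and (\ref{F}) is an immediate consequence of (\ref{varchange}).

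First, I would establish that $\Phi_t$ preserves Lebesgue measure on $\mathbb{T}\times\mathbb{R}$. The characteristic system (\ref{traj}) is an ODE with velocity field $(V,\,F(X,t))$ on phase space, whose divergence
\[
\partial_X V + \partial_V F(X,t) = 0
\]
vanishes identically. By Liouville's theorem for divergence-free (time-dependent) flows, the associated two-parameter flow $\Phi_{t,s}$ is volume preserving for all $s,t$. Although the trajectories in (\ref{traj}) are normalized by boundary conditions at $t=\infty$ rather than initial conditions at a finite time, for each fixed $t$ the map $\Phi_t:(x,v)\mapsto(X(x,v,t),V(x,v,t))$ can be recovered as $\Phi_{t,s}\circ \Phi_s$, hence has Jacobian one; the Lipschitz and smallness assumptions on $F$ guarantee it is a global diffeomorphism.

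Second, to prove (\ref{varchange}), I perform the change of variables $y=X(x,v,t)$, $u=V(x,v,t)$ in the integral on the left. The Jacobian factor equals one by the previous step, and $f(y,u,t)=f(X,V,t)=f^*(x,v)$ by (\ref{deff}), which gives the identity directly.

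Finally, for (\ref{F}), the definitions of $\opF$ and $\rho$ give
\[
E(y,t) = \int B(y-y')\rho(y',t)\rd{y'} = \int\!\!\int B(y-y')\,f(y',u',t)\rd{y'}\rd{u'}.
\]
Applying (\ref{varchange}) with the test function $\phi(y',u')=B(y-y')$ (which does not depend on $u'$) yields (\ref{F}). There is essentially no obstacle beyond the mild care needed to justify volume preservation in light of the $t=\infty$ normalization, and this is routine once $\Phi_t$ is reinterpreted via the two-parameter flow.
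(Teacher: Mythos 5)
Your proof is correct and follows the same route the paper intends: the paper simply cites Lemma 3.1 Step 3 of [CM] for this statement, and the underlying argument is exactly your change of variables with unit Jacobian (which the paper itself invokes in Section 4 via the Hamiltonian structure of (\ref{traj})) combined with the definition $f(X,V,t)=f^*(x,v)$.
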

The above lemma is Lemma 3.1 Step 3 in  \cite{CM}.
\begin{lemma}\label{lem_rhoest}
If $g^*$ satisfies ({\bf Decay})($c_2$), and define ('the density given by $g^*$')
\begin{equation}
\rho_g(y,t) = \int \delta(y-X)g^*(x,v)\rd{x}\rd{v},
\end{equation}
then one has
\begin{equation}\label{rhoest1}
\|\rho_g\|_{L^\infty} \le 10c_2.
\end{equation}
If $g^*$ satisfies ({\bf Smoothness})($c_1$) and ({\bf Decay})($c_2$), with $\nabla_{x,v} g$ satisfying ({\bf Decay})($c_2$), then
\begin{equation}\label{rhoest2}
\|\rho_g-\rho_{g,0}\|_{a,t_0,1} \le C = \frac{240a_1c_2}{a} + 4c_1,
\end{equation}
where $\rho_{g,0} = \int g^*(x,v)\rd{x}\rd{v}$.
\end{lemma}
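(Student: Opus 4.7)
The plan is to change variables along the trajectory $\Phi_t$ and exploit that, for $t\ge t_0$, the inverse map $\Phi_t^{-1}$ is a small perturbation of free transport. Integrating the trajectory equations (\ref{traj}) backwards from $t=\infty$ gives
\begin{equation}
v(y,u,t)=u+\int_t^\infty F(X(s),s)\rd{s},\qquad x(y,u,t)=y-ut-\int_t^\infty sF(X(s),s)\rd{s},
\end{equation}
where $(y,u)=(X(t),V(t))$ and $(x,v)=\Phi_t^{-1}(y,u)$. Since $\opF$ operates on fields with $\|F\|_{a,t_0}\le 8a_1$, the corrections to free transport satisfy $|v-u|\le 8a_1e^{-at}/a$ and $|x-(y-ut)|\le 16a_1 t e^{-at}/a$, both uniformly small for $t\ge t_0$ by (A1)--(A3).

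For (\ref{rhoest1}), I would apply (\ref{varchange}) with $\phi(y',u)=\delta(y-y')$ to write $\rho_g(y,t)=\int g^*(\Phi_t^{-1}(y,u))\rd{u}$ and estimate
\begin{equation}
|\rho_g(y,t)|\le c_2\int\frac{\rd{u}}{1+v(y,u,t)^4}
\end{equation}
by (Decay)($c_2$). Because the shift $|v-u|$ is uniformly small (and tends to $0$ as $a$ grows), the weight $1/(1+v^4)$ is comparable to $1/(1+u^4)$; splitting $|u|$ small vs.\ large and using $\int\rd{u}/(1+u^4)=\pi/\sqrt{2}$, a careful tracking of constants under the smallness assumptions delivers the stated factor $10$.

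For (\ref{rhoest2}), I would decompose
\begin{equation}
\rho_g-\rho_{g,0}=\bigl(\rho_g-\rho_{g,\mathrm{free}}\bigr)+\bigl(\rho_{g,\mathrm{free}}-\rho_{g,0}\bigr),\qquad \rho_{g,\mathrm{free}}(y,t)=\int g^*(y-ut,u)\rd{u}.
\end{equation}
The second bracket is handled by Fourier expansion in $y$: its $k$-th coefficient equals (up to constants) $\hat g^*(k,kt)$, so the $k=0$ mode is exactly $\rho_{g,0}$ and (Smoothness)($c_1$) bounds $\sum_{k\ne 0}|\hat g^*(k,kt)|\le 4c_1 e^{-at}$, producing the $4c_1$ contribution. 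For the first bracket, the mean value theorem writes $g^*(\Phi_t^{-1}(y,u))-g^*(y-ut,u)$ as $\nabla g^*$ at an intermediate point dotted with the perturbation $(x-(y-ut),v-u)$; using (Decay)($c_2$) for $\nabla g^*$ together with the explicit perturbation bounds above, and integrating in $u$ exactly as in part one (absorbing the small $v\mapsto u$ shift into the weight $1/(1+u^4)$), yields a bound of order $a_1 c_2 t e^{-at}/a$, and dividing by $t$ for the $\|\cdot\|_{a,t_0,1}$ norm gives the $\tfrac{240 a_1 c_2}{a}$ contribution.

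The main obstacle is tracking the constants faithfully in part (\ref{rhoest2}): one must combine two estimates of different flavor --- a Fourier-side exponential decay for the free-transport density and a pointwise velocity-weight estimate for the trajectory perturbation --- while retaining the factor $a_1$ inherited from $\|F\|_{a,t_0}\le 8a_1$ and the extra algebraic $t$ coming from $\int_t^\infty sF\rd{s}$, in order to reach the explicit bound $\tfrac{240a_1c_2}{a}+4c_1$ in the correct $\|\cdot\|_{a,t_0,1}$ norm (rather than only an $\|\cdot\|_{a,t_0}$-type bound that would lose the crucial polynomial factor).
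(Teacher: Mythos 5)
Your proposal is correct and follows essentially the same route as the paper's proof: the $L^\infty$ bound via the change of variables $(x,v)\mapsto(X,V)$ and transfer of the $1/(1+v^4)$ decay to the $V$-variable using $|v-V|\le(\|E\|_{a,t_0}/a)e^{-at}$, and for (\ref{rhoest2}) the same two-term decomposition into (trajectory density minus free-transport density), handled by the mean value theorem on $\nabla g^*$ with the perturbation bounds, plus (free-transport density minus mean), handled by the Fourier expansion $\sum_{k\ne0}\hat g^*(k,kt)e^{ikX}\le 4c_1e^{-at}$. The constants you leave implicit are exactly the ones the paper computes, so no gap.
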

\begin{proof}
This is Lemma 3.1 Step 4 and Theorem 3.4 Step 2 in  \cite{CM}, with slight improvement. We include the proof below.

First, one has the fact that $|v-V(x,v,t)| \le (\|E\|_{a,t_0}/a)e^{-at}$ (equation (A.2) in \cite{CM}). By assumption one has $|g^*(x,v)| \le c_2/(1+v^4)$. Thus
\begin{equation}
\begin{split}
|g^*(x,v)| \le c_2,\quad & \text{for}\quad  |v| < (\|E\|_{a,t_0}/a)e^{-at}, \\
|g^*(x,v)| \le \frac{c_2}{1+(|V|-(\|E\|_{a,t_0}/a)e^{-at})^4},\quad & \text{for} \quad |v| \ge (\|E\|_{a,t_0}/a)e^{-at}, \\
\end{split}
\end{equation}
where the second inequality is because of the condition $|v|\ge |V|-(\|E\|_{a,t_0}/a)e^{-at}$. Then, writing $g(X,V,t) = g^*(x,v)$,
\begin{equation}
\begin{split}
|\rho_g(x,t)| = &  \int \int \delta(y-X) g(X,V,t)\rd{X}\rd{V} \\
\le & \int \int_{|V|<2(\|E\|_{a,t_0}/a)e^{-at}} \delta(y-X)c_2\rd{V}\rd{X} \\ & + \int \int_{|V|\ge 2(\|E\|_{a,t_0}/a)e^{-at}} \delta(y-X)\frac{c_2}{1+(|V|-(\|E\|_{a,t_0}/a)e^{-at})^4}\rd{V}\rd{X} \\
\le & 4(\|E\|_{a,t_0}/a)e^{-at}c_2 + \int \frac{c_2}{1+(V/2)^4}\rd{V}\\ 
\le & 4c_2 + \sqrt{2}\pi c_2 \le 10 c_2,\\ 
\end{split}
\end{equation}
where we used the fact that $|V|\ge 2(\|E\|_{a,t_0}/a)e^{-at}$ implies $|v| \ge (\|E\|_{a,t_0}/a)e^{-at} $ in the first inequality, and the fact that $(\|E\|_{a,t_0}/a)e^{-at} \le 1$ (due to ({\bf A2})) in the last step. This proves (\ref{rhoest1}).

Next, fix $t$,
\begin{equation}
\begin{split}
|\rho_g(X,t)-\rho_{g,0}| = &  \left|\int g^*(x(X,V),v(X,V))\rd{V} - \rho_{g,0}\right| \\
\le & \int |g^*(x(X,V),v(X,V)) - g^*(X-Vt,V)|\rd{V} + \left|\int g^*(X-Vt,V)\rd{V} -\rho_{g,0}\right| \\
= & I_1+I_2.
\end{split}
\end{equation}
$I_1$ is estimated by
\begin{equation}
\begin{split}
I_1 = & \int |\nabla g^*(x_1,v_1)| \cdot(|x-(X-Vt)| + |v-V|) \rd{V} \\ &  \text{where} \quad (x_1,v_1) \text{ are some values with } |v_1|\ge \min\{|v|,|V|\}\\
\le & \int \frac{c_2}{1+\max\{0,|V|-1\}^4} \cdot(  \|E\|_{a,t_0}\frac{2}{a}te^{-at}+ (\|E\|_{a,t_0}/a)e^{-at}) \rd{V} \\
\le & c_2 (2+\sqrt{2}\pi) \cdot 8a_1 \frac{3}{a} te^{-at} \le \frac{240a_1c_2}{a}te^{-at},
\end{split}
\end{equation}
where in the first inequality we used $|v-V(x,v,t)| \le (\|E\|_{a,t_0}/a)e^{-at}\le 1$ and
\begin{equation}
\begin{split}
|x-(X-Vt)| = &\left|\int_t^\infty s E(X(s),s) \rd{s}\right| \le \|E\|_{a,t_0} \int_t^\infty se^{-as} \rd{s} = \|E\|_{a,t_0}(\frac{1}{a}te^{-at} + \frac{1}{a^2} e^{-at}) \\ \le & \|E\|_{a,t_0}\frac{2}{a}te^{-at}. \\
\end{split}
\end{equation}
To estimate $I_2$, first notice that
\begin{equation}
\int g^*(X-Vt,V) \rd{V} = \sum_k \hat{g}^*(k,kt)e^{ikX},\quad \rho_{g,0} = \hat{g}^*(0,0).
\end{equation}
Then $I_2$ is estimated by 
\begin{equation}
\begin{split}
I_2 \le \sum_{k\ne 0} |\hat{g}^*(k,kt)| \le \sum_{k\ne 0} \frac{c_1}{1+k^2}e^{-a|k|t} \le e^{-at}c_1\sum_{k\ne 0} \frac{1}{1+k^2} \le 4c_1e^{-at}.
\end{split}
\end{equation}
This finishes the proof of (\ref{rhoest2}) with $C = \frac{240a_1c_2}{a} + 4c_1$.
\end{proof}

The following lemma is a modification of Lemma 3.1 Step 7, 8 in \cite{CM}.
\begin{lemma}\label{lem_F}
Define $\opF(E;g^*)$ by
\begin{equation}
\opF(E;g^*) = \int B(y-X)g^*(x,v)\rd{x}\rd{v}.
\end{equation}
If $g^*$ satisfies ({\bf Smoothness})($c_1$) and ({\bf Decay})($c_2$), then
\begin{equation}
\|\opF(E;g^*)\|_{a,t_0} \le C = 8a_1\frac{88c_2}{a^2-80a_2} + 4c_1.
\end{equation}
\end{lemma}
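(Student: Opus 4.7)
The plan is to adapt the fixed-point-style estimate of Lemma 3.1, Steps 7--8 in \cite{CM} to the setting where the driving profile $g^*$ may differ from the fixed-point profile $f^*$. I would start with the decomposition
\[
\opF(E;g^*)(y,t) \;=\; \opF(0;g^*)(y,t) \;+\; \bigl[\opF(E;g^*)-\opF(0;g^*)\bigr](y,t),
\]
in which $\opF(0;g^*)$ replaces the characteristic $X$ by the free-transport motion $x+vt$. Expanding $B(z)=\sum_{k\ne 0}(2\pi i k)^{-1}e^{ikz}$ and integrating against $g^*$, the first piece reduces to $\sum_{k\ne 0}\hat g^*(\pm k,\pm kt)e^{iky}/(ik)$ (sign depending on Fourier convention), and the ({\bf Smoothness})($c_1$) hypothesis immediately yields $|\opF(0;g^*)|\le 4c_1 e^{-at}$, verbatim with the recalled computation for $\opF(0)$ in Section 3.1.

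For the correction piece I would write $X(x,v,t)=x+vt+R$ with $R(x,v,t)=\int_t^\infty (r-t)E(X(r),r)\,dr$, so that $|R|\le \|E\|_{a,t_0}e^{-at}/a^2$ pointwise. Because $B$ is piecewise linear with slope $-(2\pi)^{-1}$ and unit jumps at $2\pi\mathbb{Z}$, the difference $B(y-X)-B(y-x-vt)$ splits into a smooth piece of size $|R|/(2\pi)$ (where both arguments lie in the same period) and a jump piece of size $\pm 1$ supported on the narrow exceptional set where they do not. The smooth piece is controlled by $\int |R||g^*|dxdv$, which by the ({\bf Decay})($c_2$) bound $|g^*|\le c_2/(1+v^4)$ is $O(c_2\|E\|_{a,t_0}e^{-at}/a^2)$. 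For the jump piece I would fix $x$ and count periods: the exceptional $v$-set in each period has measure $O(|R|/t)$, and summing the $v^{-4}$-weighted contributions over all $\sim t$ relevant periods produces a factor $t$ that exactly cancels the $1/t$, giving again $O(c_2\|E\|_{a,t_0}e^{-at}/a^2)$.

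The denominator $a^2-80a_2$ (rather than bare $a^2$) would appear from closing these estimates self-consistently along the perturbed trajectories rather than the free-transport ones: the first-variation equations for $\partial_{x,v}X$ are driven by $\partial_x E$, and using $\|\partial_x E\|_{L^\infty}\le 20a_2$ in a Gr\"onwall bound yields trajectory derivatives growing at most like $e^{\sqrt{20a_2}\,r}$; combining two such growths with the $(r-t)$-weighted double integration in $R$ produces the factor $\bigl((a-2\sqrt{20a_2})(a+2\sqrt{20a_2})\bigr)^{-1}=(a^2-80a_2)^{-1}$. Assumption~({\bf A5}), $8C_E\le a^2$, keeps this denominator positive. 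Combining the two bounds and substituting $\|E\|_{a,t_0}\le 8a_1$ yields the claimed constant. I expect the main analytic obstacle to be the jump contribution, since naive pointwise bounds on $B(y-X)-B(y-x-vt)$ fail on a set of positive measure; the careful period-counting, while exploiting the $v^{-4}$ decay of $g^*$, is what ultimately produces the large numerical constant $88$ in the statement.
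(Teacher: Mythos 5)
Your overall skeleton matches the paper's proof: split $\opF(E;g^*)=\opF(0;g^*)+[\opF(E;g^*)-\opF(0;g^*)]$, bound the first piece by $4c_1e^{-at}$ via the Fourier expansion of $B$ (this part of your argument is verbatim the computation recalled in Section 3.1), and bound the second piece by a Lipschitz constant $\frac{88c_2}{a^2-80a_2}$ in $E$ times $\|E\|_{a,t_0}\le 8a_1$. The paper does not reprove that Lipschitz estimate --- it simply cites Lemma 3.1 Step 7 of \cite{CM} together with (\ref{rhoest1}) --- whereas you attempt to reconstruct it, and the reconstruction is where the problems lie.

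The genuine gap is your explanation of the denominator $a^2-80a_2$. It does not arise from a Gr\"onwall bound with trajectory derivatives growing like $e^{\sqrt{20a_2}\,r}$: combining two such exponentials and integrating twice against $e^{-ar}$ would produce $(a-2\sqrt{20a_2})^{-2}$, not $\bigl((a-2\sqrt{20a_2})(a+2\sqrt{20a_2})\bigr)^{-1}$, so the factorization you write down is numerology rather than a derivation. The actual mechanism (visible in Lemma \ref{lem_contra} of this paper and in \cite{CM}) is a contraction/Neumann-series resummation: the trajectory difference satisfies $X_1-X_2=\int_t^\infty(s-t)\bigl[E_1(X_1)-E_2(X_2)\bigr]\rd{s}$, the part linear in $X_1-X_2$ has operator norm at most $\|\partial_x E\|_{L^\infty}\cdot\frac{4}{a^2}\le\frac{80a_2}{a^2}$, and inverting $I-P$ yields the factor $\frac{1}{1-80a_2/a^2}\cdot\frac{4}{a^2}=\frac{4}{a^2-80a_2}$. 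Relatedly, positivity of $a^2-80a_2$ is guaranteed by ({\bf A1}) (since $a\ge 15\sqrt{a_2}$ gives $a^2\ge 225a_2$), not by ({\bf A5}), which plays no role here. Your smooth/jump splitting of $B$ and the period-counting over the $v$-line are in the right spirit for extracting the $c_2$-dependence --- this is essentially how the density bound (\ref{rhoest1}) enters --- but as written your argument would not close to the stated constant without the contraction step above.
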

\begin{proof}
Lemma 3.1 Step 7 in \cite{CM}  with (\ref{rhoest1}) implies that
\begin{equation}
\|\opF(E_1;g^*)-\opF(E_2;g^*)\|_{a,t_0} \le \frac{88c_2}{a^2-80a_2}\|E_1-E_2\|_{a,t_0}.
\end{equation}
Step 8 (corrected) implies that
\begin{equation}
\|\opF(0;g^*)\|_{a,t_0} \le 4c_1.
\end{equation}
Then the conclusion follows from the estimate $\|E\|_{a,t_0} \le 8a_1$.
\end{proof}

\subsection{Contraction property in $\opC_{a,t_0,k}$}
In order to estimate the derivatives of $X,V,E$ in the spaces $\opC_{a,t_0,k}$, we will need the contraction properties in these spaces. The assumption $t_0\ge 4K$ in ({\bf A2}) will play a crucial role in making the resulting smallness conditions independent of $K$.

We first compute the integrals
\begin{equation}\label{inte}
\begin{split}
\int_t^\infty s^ke^{-as} \rd{s} = & \frac{1}{a}t^ke^{-at} + \frac{k}{a^2}t^{k-1}e^{-at} + \cdots + \frac{k!}{a^{k+1}}e^{-at} = e^{-at}\sum_{j=0}^k \frac{k!}{j!a^{k-j+1}}t^j, \\
\int_t^\infty (s-t)s^ke^{-as} \rd{s} = & e^{-at}\left[\sum_{j=0}^{k+1} \frac{(k+1)!}{j!a^{k+1-j+1}}t^j  - \sum_{j=0}^k \frac{k!}{j!a^{k-j+1}}t^{j+1} \right]\\
= & e^{-at}\left[ \frac{(k+1)!}{a^{k+2}} + t\sum_{j=0}^{k} \left(\frac{(k+1)!}{(j+1)!}-\frac{k!}{j!}\right)\frac{1}{a^{k-j+1}}t^j \right] \\
= & e^{-at}\left[ \frac{(k+1)!}{a^{k+2}} + t\sum_{j=0}^{k-1} \left(\frac{(k+1)!}{(j+1)!}-\frac{k!}{j!}\right)\frac{1}{a^{k-j+1}}t^j \right]. \\
\end{split}
\end{equation}
Then one has
\begin{equation}\label{int1}
\int_t^\infty (s-t)s^ke^{-as} \rd{s} \le t^ke^{-at}\frac{1}{a^2}\left[ \frac{(k+1)!}{t_0^k} +\sum_{j=0}^{k-1}\left(\frac{(k+1)!}{(j+1)!}-\frac{k!}{j!}\right)\frac{1}{t_0^{k-1-j}}\right],
\end{equation}
if $t \ge t_0$ and assumption ({\bf A1}) providing $a\ge 1$. Thus
\begin{equation}
\left(\frac{(k+1)!}{(j+1)!}-\frac{k!}{j!}\right)\frac{1}{t_0^{k-1-j}} = \frac{k!(k-j)}{(j+1)!}\frac{1}{t_0^{k-1-j}} \le (k-j)\frac{k^{k-1-j}}{t_0^{k-1-j}} \le 2(k-j)2^{-(k-j)},
\end{equation}
for integer $0\le j \le k-1$, since ({\bf A2}) gives $t\ge t_0 \ge 4K \ge 2k$. Similarly $\frac{(k+1)!}{t_0^k} \le 2^{-k+1} $. Thus the quantity in the bracket in (\ref{int1}) is controlled by
\begin{equation}
2^{-k+1} + \sum_{j=0}^{k-1} 2(k-j)2^{-(k-j)} = 2^{-k+1} + 4-2(k+2)2^{-k} < 4.
\end{equation}
Therefore we obtain
\begin{lemma}\label{lem_op}
Under the assumption ({\bf A1}), ({\bf A2}) and $t\ge t_0,\,k\le 2K$,
\begin{equation}
\int_t^\infty (s-t)s^ke^{-as} \rd{s} \le t^ke^{-at}\frac{4}{a^2}.
\end{equation}
\end{lemma}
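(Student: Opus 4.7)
The plan is to compute $\int_t^\infty (s-t)s^k e^{-as}\rd s$ in closed form via repeated integration by parts, factor out the claimed leading behavior $t^k e^{-at}/a^2$, and then show the remaining $k$-dependent coefficient is bounded by a universal constant using the assumption $t_0 \ge 4K \ge 2k$.

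First I would compute $\int_t^\infty s^k e^{-as}\rd s$ by $k$ successive integrations by parts, obtaining the explicit formula $e^{-at}\sum_{j=0}^{k} \frac{k!}{j!\,a^{k-j+1}} t^j$. Then I would write
\begin{equation*}
\int_t^\infty (s-t)s^k e^{-as}\rd s = \int_t^\infty s^{k+1} e^{-as}\rd s - t\int_t^\infty s^k e^{-as}\rd s
\end{equation*}
and subtract the two explicit sums. The crucial observation is that the would-be leading term of order $t^{k+1}/a$ cancels exactly; otherwise the bound would be off by a factor of $a\,t$ and useless for the intended applications. After cancellation, the result is $e^{-at}$ times a polynomial in $t$ of degree $k$ whose coefficients carry inverse powers of $a$.

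Next I would factor out $t^k e^{-at}/a^2$. Since $a \ge 1$ by ({\bf A1}) and $t \ge t_0$, any lower-degree term $t^j$ with $j < k$ can be absorbed at the cost of $t_0^{-(k-j)}$, giving a bracketed coefficient of the schematic form
\begin{equation*}
\frac{(k+1)!}{t_0^{k}} + \sum_{j=0}^{k-1}\Bigl(\frac{(k+1)!}{(j+1)!}-\frac{k!}{j!}\Bigr)\frac{1}{t_0^{k-1-j}}.
\end{equation*}
The combinatorial simplification $\frac{(k+1)!}{(j+1)!}-\frac{k!}{j!} = \frac{k!(k-j)}{(j+1)!}$ makes the structure transparent.

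The main obstacle, and the part that genuinely uses ({\bf A2}), is showing this bracket stays bounded as $k$ grows. Here I would invoke $t_0 \ge 2k$ to get $\frac{k!}{(j+1)!\,t_0^{k-1-j}} \le \frac{k^{k-1-j}}{t_0^{k-1-j}} \le 2^{-(k-1-j)}$, so each term of the sum is majorized by $2(k-j)\cdot 2^{-(k-j)}$ and the leading $(k+1)!/t_0^{k}$ piece by $2^{-(k-1)}$. Summing the geometric-type series $\sum_{m\ge 1} 2m\cdot 2^{-m} = 4$ and adding the small leading contribution yields a bracket strictly less than $4$, which is the stated bound. The whole argument is elementary once the two key ingredients are in place: the algebraic cancellation of the $t^{k+1}$ term, and the geometric smallness enforced by $t_0 \ge 4K$.
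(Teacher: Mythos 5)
Your proposal is correct and follows essentially the same route as the paper's proof: the same closed-form evaluation by integration by parts, the same cancellation of the $t^{k+1}$ term, the same reduction to the bracket $\frac{(k+1)!}{t_0^k}+\sum_{j=0}^{k-1}\bigl(\frac{(k+1)!}{(j+1)!}-\frac{k!}{j!}\bigr)t_0^{-(k-1-j)}$, and the same use of $t_0\ge 2k$ to majorize term by term. One small point of wording: you should keep the \emph{finite} sum $\sum_{m=1}^{k}2m\,2^{-m}=4-(2k+4)2^{-k}$ rather than the infinite series, since its deficit below $4$ is exactly what absorbs the leading contribution $(k+1)!/t_0^k\le 2^{1-k}$; bounding the sum by $4$ first and then adding a positive quantity would not literally give ``strictly less than $4$.''
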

By estimating the first integral in (\ref{inte}) a similar way, one obtains
\begin{lemma}\label{lem_op1}
Under the assumption ({\bf A1}), ({\bf A2}) and $t\ge t_0,\,k\le 2K$,
\begin{equation}
\int_t^\infty s^ke^{-as} \rd{s} \le t^ke^{-at}\frac{2}{a}.
\end{equation}
\end{lemma}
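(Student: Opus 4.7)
The plan is to mimic directly the strategy used for Lemma \ref{lem_op}: start from the explicit antiderivative already computed in (\ref{inte}), factor out the leading growth $t^k e^{-at}/a$, and bound the remaining finite sum by a geometric series using the hypotheses ({\bf A1}) and ({\bf A2}).

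Concretely, the first identity in (\ref{inte}) gives
\begin{equation*}
\int_t^\infty s^k e^{-as}\rd{s} = e^{-at}\sum_{j=0}^k \frac{k!}{j!\,a^{k-j+1}} t^j.
\end{equation*}
I would pull out the factor $t^k e^{-at}/a$ and reindex with $i = k-j$, leaving
\begin{equation*}
\int_t^\infty s^k e^{-as}\rd{s} = \frac{t^k e^{-at}}{a}\sum_{i=0}^{k} \frac{k!}{(k-i)!}\cdot\frac{1}{(at)^{i}}.
\end{equation*}
Then I would use the elementary bound $k!/(k-i)! = k(k-1)\cdots(k-i+1) \le k^i$ to reduce the sum to $\sum_{i=0}^{k}\bigl(k/(at)\bigr)^i$.

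The final step is to show that the ratio $k/(at)$ is at most $1/2$ so that the geometric series is bounded by $2$. By ({\bf A1}) we have $a\ge 1$, and by ({\bf A2}) together with the hypothesis $t\ge t_0$ and $k\le 2K$, we get $t\ge t_0 \ge 4K \ge 2k$, hence $k/(at) \le k/t \le 1/2$. Therefore $\sum_{i=0}^{k}(k/(at))^i \le \sum_{i=0}^\infty 2^{-i} = 2$, and combining with the factored-out prefactor gives the claimed inequality.

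No real obstacle is expected: the argument is purely arithmetic, entirely parallel to the calculation preceding Lemma \ref{lem_op}, and in fact slightly simpler because there is no extra $(s-t)$ factor to absorb. The only thing to watch is that the estimate $k/t \le 1/2$ requires the $4K$ (not $2K$) threshold in ({\bf A2}), which is exactly why that constant was chosen in the statement of ({\bf A2}).
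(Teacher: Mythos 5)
Your proof is correct and is essentially the paper's own argument: the paper only remarks that the first integral in (\ref{inte}) is estimated ``a similar way'' to Lemma \ref{lem_op}, and your computation---factoring out $t^k e^{-at}/a$, bounding $k!/(k-i)!\le k^i$, and using ({\bf A1}), ({\bf A2}) to get $k/(at)\le 1/2$ and sum the geometric series to $2$---is exactly that intended calculation.
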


Now we give the contraction argument in $\opC_{a,t_0,k}$, which can be viewed as a generalization of the contraction argument in \cite{CM}:
\begin{lemma}\label{lem_contra}
Consider the operator
\begin{equation}
P[Y](t) = \int_t^\infty (s-t) (\partial_x E(X(x,v,s),s)Y(s)+\Phi(s))  \rd{s},
\end{equation}
where $\Phi(s)$ is some given source term. Then under assumptions ({\bf A1}), ({\bf A2}),
\begin{equation}\label{contra1}
\|P[Y_1](t) - P[Y_2](t)\|_{a,t_0,k} \le \frac{80a_2}{a^2}\|Y_1-Y_2\|_{a,t_0,k},
\end{equation}
for $k\le 2K$ and any $Y_1,Y_2\in \opC_{a,t_0,k}$. In particular, if $\Phi(s) \in \opC_{a,t_0,k}$, then $P$ is a contraction map on $\opC_{a,t_0,k}$, and one has the estimate
\begin{equation}\label{contra2}
\|Y_0\|_{a,t_0,k} \le \frac{1}{1-\frac{80a_2}{a^2}}\frac{4}{a^2}\|\Phi\|_{a,t_0,k} \le \frac{8}{a^2}\|\Phi\|_{a,t_0,k},
\end{equation} 
for the unique fixed point $Y_0$ of $P$.
\end{lemma}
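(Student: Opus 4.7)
The plan is a direct application of Banach's fixed point theorem, combining the $L^\infty$ bound $\|\partial_x E\|_{L^\infty}\le 20a_2$ from the preliminaries with the weighted integral estimate of Lemma \ref{lem_op}. Since the source term $\Phi$ cancels in the difference $P[Y_1]-P[Y_2]$, the contraction estimate (\ref{contra1}) reduces to controlling
\begin{equation*}
|P[Y_1](t)-P[Y_2](t)| \le \int_t^\infty (s-t)\,\|\partial_x E\|_{L^\infty}\,|Y_1(s)-Y_2(s)|\,\mathrm{d}s.
\end{equation*}
I would plug in $|Y_1(s)-Y_2(s)|\le \|Y_1-Y_2\|_{a,t_0,k}\,s^k e^{-as}$ and $\|\partial_x E\|_{L^\infty}\le 20a_2$, then apply Lemma \ref{lem_op} (whose hypotheses $t\ge t_0$ and $k\le 2K$ are exactly what is assumed here) to bound the resulting integral by $\frac{4}{a^2}t^k e^{-at}$. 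Multiplying by $t^{-k}e^{at}$ and taking the supremum over $t\ge t_0$ then gives (\ref{contra1}) with constant $20a_2\cdot\tfrac{4}{a^2}=\tfrac{80a_2}{a^2}$.

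Next I would observe that ({\bf A1}) yields $a^2\ge 225\,a_2$, so $\tfrac{80a_2}{a^2}\le \tfrac{80}{225}<\tfrac{1}{2}$; in particular $P$ is a strict contraction. To confirm that $P$ actually maps $\opC_{a,t_0,k}$ into itself when $\Phi\in \opC_{a,t_0,k}$, I would estimate $P[0]$ by the same calculation applied to the source term alone:
\begin{equation*}
|P[0](t)| \le \|\Phi\|_{a,t_0,k}\int_t^\infty (s-t)\,s^k e^{-as}\,\mathrm{d}s \le \frac{4}{a^2}\|\Phi\|_{a,t_0,k}\,t^k e^{-at},
\end{equation*}
so $\|P[0]\|_{a,t_0,k}\le \frac{4}{a^2}\|\Phi\|_{a,t_0,k}<\infty$, and combining with (\ref{contra1}) shows $P$ sends $\opC_{a,t_0,k}$ to itself continuously. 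Banach's fixed point theorem then produces a unique $Y_0\in \opC_{a,t_0,k}$ with $P[Y_0]=Y_0$.

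To obtain the quantitative bound (\ref{contra2}), I would write $Y_0=P[Y_0]=(P[Y_0]-P[0])+P[0]$, apply (\ref{contra1}) to the first piece and the estimate above to the second, and take norms:
\begin{equation*}
\|Y_0\|_{a,t_0,k} \le \frac{80a_2}{a^2}\|Y_0\|_{a,t_0,k}+\frac{4}{a^2}\|\Phi\|_{a,t_0,k}.
\end{equation*}
Rearranging gives $\|Y_0\|_{a,t_0,k}\le \frac{1}{1-80a_2/a^2}\cdot\frac{4}{a^2}\|\Phi\|_{a,t_0,k}$, and since $\tfrac{80a_2}{a^2}\le \tfrac{1}{2}$ the prefactor is bounded by $2$, yielding the stated $\tfrac{8}{a^2}\|\Phi\|_{a,t_0,k}$.

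I do not expect any genuine obstacle: the lemma is essentially a bookkeeping statement that packages the contraction argument of \cite{CM} with the correct polynomial weight $t^{-k}e^{at}$. The only point that needs care is verifying that Lemma \ref{lem_op} is applicable, which is precisely why assumption ({\bf A2}) imposes $t_0\ge 4K$ and the statement restricts to $k\le 2K$; without the latter the integral bound would pick up factors $(k+1)!$ that spoil the $K$-independent constant on the right-hand side.
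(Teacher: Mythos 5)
Your proposal is correct and follows essentially the same route as the paper: the difference bound via $\|\partial_x E\|_{L^\infty}\le 20a_2$ combined with Lemma \ref{lem_op}, the observation that ({\bf A1}) makes $\tfrac{80a_2}{a^2}\le\tfrac12$, and the decomposition $Y_0=(P[Y_0]-P[0])+P[0]$ to get (\ref{contra2}) are exactly the steps in the paper's proof. No gaps.
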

\begin{proof}
\begin{equation}
\begin{split}
|P[Y_1](t) - P[Y_2](t)| & \le \int_t^\infty (s-t) |\partial_x E(X(x,v,s),s)|\cdot|Y_1(s)-Y_2(s)|  \rd{s} \\
& \le \|\partial_x E\|_{L^\infty}\|Y_1-Y_2\|_{a,t_0,k}\int_t^\infty (s-t)s^k e^{-as}  \rd{s} \\
& \le 80a_2\|Y_1-Y_2\|_{a,t_0,k}\frac{t^ke^{-at}}{a^2},  \\
\end{split}
\end{equation}
thus one gets (\ref{contra1}) (where the last step is due to Lemma \ref{lem_op}). If $k\le 2K$, then the constant in (\ref{contra1}) is at most $1/2$, by ({\bf A1}). In this case, since
\begin{equation}
P[0] \le  \|\Phi\|_{a,t_0,k}\int_t^\infty (s-t)s^k e^{-as}\rd{s} \le \|\Phi\|_{a,t_0,k}\frac{4t^ke^{-at}}{a^2},
\end{equation}
i.e.,
\begin{equation}
\|P[0]\|_{a,t_0,k} \le \|\Phi\|_{a,t_0,k}\frac{4}{a^2},
\end{equation}
then
\begin{equation}
\begin{split}
\|Y_0\|_{a,t_0,k} & = \|P[Y_0]\|_{a,t_0,k} \le  \|P[Y_0]-P[0]\|_{a,t_0,k} + \|P[0]\|_{a,t_0,k} \\ 
& \le \frac{80a_2}{a^2}\|Y_0\|_{a,t_0,k}+ \|\Phi\|_{a,t_0,k}\frac{4}{a^2},
\end{split}
\end{equation}
which implies (\ref{contra2}).
\end{proof}
\begin{remark}
Here we explain the importance of the assumption $t_0\ge 4K$. If on the contrary, we take $t_0$ to be a fixed constant. Then the bracket in (\ref{int1}) will be at least $O((k+1)!)$, and as a result, the constant in (\ref{contra1}) will be at least $O((k+1)!)$. Since one needs this constant to be at most $1/2$ to obtain a contraction map, one will need $a_2\le \frac{C}{(k+1)!}a^2$. This will prevent the assumptions on $a,a_1,a_2$ being independent of $K$.
\end{remark}

\subsection{Formulas for higher order derivatives}
We need a few lemmas regarding the higher derivatives of composite functions, which are variants of the Fa\`a di Bruno formula. It is easy to prove them by induction, and we omit the proof. 
\begin{lemma}\label{lem_dzE}
Let $E = E(X,z), X=X(z)$. Then
\begin{equation}
\frac{\partial^k}{\partial z^k}[E(X(z),z)] = \sum_{\alpha+\beta+\gamma= k} \left[ \partial_x^\alpha\partial_z^\beta E \sum_{\gamma_1+\dots+\gamma_\alpha=\gamma}c\prod_{j=1}^\alpha \partial_z^{\gamma_j+1}X \right],
\end{equation}
where all the indices are non-negative, and the constants $c$ are non-negative integers depending on the summation indices (we suppress this dependence). It can be written as a polynomial
\begin{equation}
\frac{\partial^k}{\partial z^k}[E(X(z),z)] = P^E_k\left(\{\partial_x^\alpha\partial_z^\beta E\}_{\alpha+\beta\le k,\beta\le k-1},\{\partial_z^\delta X\}_{1\le\delta\le k-1}\right) + \partial_x E \partial_z^k X + \partial_z^k E.
\end{equation} 
\end{lemma}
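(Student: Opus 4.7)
The plan is to proceed by induction on $k$. The base case $k=1$ is just the chain rule
\begin{equation}
\partial_z[E(X(z),z)] = \partial_x E \cdot \partial_z X + \partial_z E,
\end{equation}
which matches the stated form with $P^E_1 \equiv 0$; the two summands correspond to the multi-indices $(\alpha,\beta,\gamma)=(1,0,0)$ (with $\gamma_1=0$) and $(0,1,0)$ (with empty product), both with coefficient $c=1$.

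For the inductive step, I would apply $\partial_z$ to the $k$-level formula and regroup. A typical summand $\partial_x^\alpha \partial_z^\beta E \cdot \prod_{j=1}^\alpha \partial_z^{\gamma_j+1}X$ splits, after one $\partial_z$, into three kinds of pieces: (i) $\partial_x$ acting on $E$, giving $\partial_x^{\alpha+1}\partial_z^\beta E \cdot \partial_z X$ times the old product; (ii) $\partial_z$ acting on $E$, giving $\partial_x^\alpha \partial_z^{\beta+1} E$ times the old product; and (iii) Leibniz on the product, incrementing one $\gamma_j$ by $1$. A quick check confirms that in each case the total ``degree'' $\alpha + \beta + \gamma$ (with $\gamma = \sum_j \gamma_j$) increases by exactly one, so the new summands are parametrized by multi-indices with $\alpha'+\beta'+\gamma' = k+1$, which preserves the structure of the formula.

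The only real task is to verify the clean separation of the two distinguished terms $\partial_x E \cdot \partial_z^{k+1}X$ and $\partial_z^{k+1}E$ from the polynomial remainder $P^E_{k+1}$. A factor $\partial_z^{k+1}X$ in a summand with $\alpha+\beta+\gamma=k+1$ forces $\alpha=1$, $\beta=0$, $\gamma=\gamma_1=k$, so the only contribution comes via case (iii) from the $k$-level term $\partial_x E \cdot \partial_z^k X$ (the polynomial $P^E_k$ contains only $\partial_z^\delta X$ with $\delta\le k-1$, so no other $k$-level summand can produce $\partial_z^{k+1}X$). Analogously, $\partial_z^{k+1}E$ arises uniquely via case (ii) from $\partial_z^k E$. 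Every other term produced by (i)--(iii) automatically satisfies $\beta\le k$ and contains only $\partial_z^\delta X$ with $1\le\delta\le k$, so it fits into $P^E_{k+1}$. Non-negativity and integrality of the coefficients $c$ follow because they merely count the multiplicity with which each multi-index is produced by chain rule and Leibniz.

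I do not foresee any serious obstacle; this is essentially a Fa\`a di Bruno formula in a slightly non-standard parametrization, and the care needed lies only in tracking how the multi-indices transform under a single $\partial_z$. The utility of this lemma in Section 5 comes from the explicit isolation of the two top-order linear terms $\partial_x E \cdot \partial_z^k X$ and $\partial_z^k E$: an analogous expansion for $\partial_z^k X$ will then let the induction in $k$ reduce the estimate of $\partial_z^k E$ to a contraction problem with source terms coming from the lower-order polynomial part.
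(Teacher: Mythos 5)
Your induction is correct and complete: the base case, the bookkeeping showing $\alpha+\beta+\gamma$ increases by exactly one under each of the three differentiation cases, and the isolation of the two top-order terms are all sound. The paper itself states this lemma as a variant of the Fa\`a di Bruno formula and omits the proof with the remark that it is ``easy to prove by induction,'' which is precisely the argument you have supplied.
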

\begin{lemma}\label{lem_dzB}
Let $B = B(X), X=X(z), g=g(z)$. Then
\begin{equation}
\frac{\partial^k}{\partial z^k}[g(z)B(X(z))] = \sum_{\alpha+\beta+\gamma= k} \left[ \partial_z^\beta g\partial_x^\alpha B \sum_{\gamma_1+\dots+\gamma_\alpha=\gamma}c\prod_{j=1}^\alpha \partial_z^{\gamma_j+1}X \right].
\end{equation}
It can be written as a polynomial
\begin{equation}
\frac{\partial^k}{\partial z^k}[g(z)B(X(z))] = P^B_k\left(\{\partial_x^\alpha B\}_{\alpha\le k},\{\partial_z^\beta g\}_{\beta\le k},\{\partial_z^\delta X\}_{1\le\delta\le k-1}\right) + g\partial_x B \partial_z^k X.
\end{equation} 
\end{lemma}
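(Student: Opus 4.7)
The plan is a straightforward induction on $k$, combining the Leibniz product rule with the chain rule (equivalently, Fa\`a~di~Bruno for the single composition $B(X(z))$). The base case $k=0$ is immediate: both sides equal $g(z)\,B(X(z))$, realized by the unique summand with $\alpha=\beta=\gamma=0$, empty $X$-product, and coefficient $c=1$. For the inductive step, assume the first displayed identity at level $k$ and apply $\partial_z$ to each summand
\[
T=\partial_z^\beta g \cdot \partial_x^\alpha B(X) \cdot \prod_{j=1}^\alpha \partial_z^{\gamma_j+1}X
\]
via Leibniz. Exactly three types of contributions arise: (i) $\partial_z$ on $\partial_z^\beta g$ yields a term of the same shape with $\beta\mapsto\beta+1$; (ii) $\partial_z$ on $\partial_x^\alpha B(X)$ uses the chain rule $\partial_z[\partial_x^\alpha B]=\partial_x^{\alpha+1}B\cdot\partial_z X$, producing $\alpha\mapsto\alpha+1$ together with an extra factor $\partial_z X=\partial_z^{0+1}X$ appended to the $X$-product; (iii) $\partial_z$ on one factor $\partial_z^{\gamma_{j_0}+1}X$ gives the same term with $\gamma_{j_0}\mapsto\gamma_{j_0}+1$, all other $\gamma_j$ fixed. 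Each produced contribution fits the pattern of the level-$(k+1)$ formula at a multi-index of total weight $\alpha+\beta+\gamma+1=k+1$, and collecting terms with the same $(\alpha,\beta,\gamma_1,\dots,\gamma_\alpha)$ gives a new coefficient that is a sum of old $c$'s together with small permutation multiplicities from the Leibniz rule, hence a non-negative integer. This closes the induction.

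To read off the second display (the polynomial form), I would separate out the unique summand with $\alpha=1$, $\beta=0$, $\gamma_1=k-1$, which evaluates to $g\,\partial_x B\,\partial_z^k X$. A short case split on $\alpha$ then shows that every other summand involves only $\partial_x^\alpha B$ with $\alpha\le k$, $\partial_z^\beta g$ with $\beta\le k$, and $\partial_z^\delta X$ with $1\le\delta\le k-1$: if $\alpha=0$ then the term is $\partial_z^k g\cdot B$; if $\alpha\ge 2$ then each factor satisfies $\gamma_j+1\le \gamma+1\le k-\alpha+1\le k-1$; if $\alpha=1$ and $\beta\ge 1$ then $\gamma_1+1=k-\beta\le k-1$. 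Thus every remaining summand is absorbed into a polynomial $P^B_k$ whose arguments are exactly those listed in the lemma, which is what the decomposition asserts.

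There is no real obstacle in this proof --- the result is a pure bookkeeping exercise, which is why the authors call it \emph{easy} and omit the argument. The only mild check is that the coefficients produced at level $k+1$ remain non-negative integers after rearrangement; this is automatic because each of the three types of contribution in the inductive step preserves non-negativity and integrality, and the Leibniz multiplicities are themselves non-negative integers. If one wanted the explicit constants, they would be standard partial Bell polynomial coefficients, but the way the lemma is used later in the paper only requires the multi-index structure and the non-negativity of the $c$'s, both of which the induction above delivers directly.
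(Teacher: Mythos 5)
Your proof is correct and follows exactly the route the paper indicates but omits (``easy to prove by induction''): a Leibniz/chain-rule induction on $k$ tracking the multi-index bookkeeping, followed by isolating the $\alpha=1,\beta=0,\gamma_1=k-1$ summand to obtain the decomposition $P^B_k + g\,\partial_x B\,\partial_z^k X$. The case split confirming that every other summand only involves $\partial_z^\delta X$ with $1\le\delta\le k-1$ is the one point worth writing down, and you have it right.
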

\begin{remark}
In Lemma \ref{lem_dzE} and Lemma \ref{lem_dzB}, it is important to notice that $P^E_k$ and $P^B_k$ do NOT involve $\partial_z^k X$, and for those monomials with $\alpha>0$, there is at least one factor like $\partial_z^\delta X$ in it.
\end{remark}
\begin{lemma}\label{lem_dXx}
Fix $t$, and consider the map $\Phi_t: (x,v)\mapsto(X(x,v,t),V(x,v,t))$. Let $g=g(x,v)$. Then
\begin{equation}\label{dXx}
\frac{\partial^k}{\partial X^k}[g(x(X,V),v(X,V))] = \sum_{\alpha+\beta\ge 1,\,\alpha+\beta+ \sum (\gamma_j + \delta_j-1) = k} c \partial_x^\alpha \partial_v^\beta g \prod_{j=1}^k \partial_x^{\gamma_j}\partial_v^{\delta_j} V,
\end{equation}
where the RHS is evaluated at $(x(X,V),v(X,V))$. It can be written as a polynomial
\begin{equation}
\frac{\partial^k}{\partial X^k}[g(x(X,V),v(X,V))] = P^g_k + P^{g,1}_k + P^{g,0}_k,
\end{equation} 
where 
\begin{equation}
P^{g,0}_k = \left(\frac{\partial V}{\partial v}\right)^k\partial_x^k g,
\end{equation}
and
\begin{equation}\label{pg1}
P^{g,1}_k = \left(\frac{\partial V}{\partial v}\right)^{k-1}\sum_{j=1}^k\partial_x^{k-j}\left[(j-1)\left(\frac{\partial^2 V}{\partial v\partial x}\partial_x^{j-1} g\right) - \left(\frac{\partial V}{\partial x}\partial_x^{j-1} \partial_v g\right)\right],
\end{equation}
includes all the terms with all but one factor in the product $\prod_{j=1}^k \partial_x^{\gamma_j}\partial_v^{\delta_j} V$ not being $\frac{\partial V}{\partial v}$, and $P^g_k$  includes all the terms with at least two factors in the product not being $\frac{\partial V}{\partial v}$. 
\end{lemma}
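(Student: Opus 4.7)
The plan is to prove the lemma by induction on $k$, exploiting the fact that $\Phi_t$ is a Hamiltonian (symplectic) map and hence has Jacobian identically equal to $1$. Cramer's rule for the inverse Jacobian then gives
\begin{equation*}
\frac{\partial x}{\partial X}=\frac{\partial V}{\partial v},\qquad \frac{\partial v}{\partial X}=-\frac{\partial V}{\partial x},
\end{equation*}
so that, applied to functions of $(x,v)$,
\begin{equation*}
\frac{\partial}{\partial X}=\frac{\partial V}{\partial v}\,\partial_x-\frac{\partial V}{\partial x}\,\partial_v.
\end{equation*}

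First I would establish identity (\ref{dXx}) by induction on $k$. The base case $k=1$ is the operator identity above applied to $g$. For the inductive step, apply $\partial/\partial X$ to a generic monomial $c\,(\partial_x^\alpha\partial_v^\beta g)\prod_{j=1}^{k}\partial_x^{\gamma_j}\partial_v^{\delta_j}V$ appearing at level $k$. By the Leibniz rule, every resulting monomial (i) acquires one additional first-order $V$-factor (either $\partial V/\partial v$ or $-\partial V/\partial x$) coming from the leading coefficient of $\partial/\partial X$, and (ii) has exactly one of the indices $\alpha,\beta$ or some $\gamma_j,\delta_j$ raised by one from the differentiation. Thus the number of $V$-factors grows from $k$ to $k+1$, the constraint $\alpha+\beta\ge 1$ is preserved, and the homogeneity index $\alpha+\beta+\sum_j(\gamma_j+\delta_j-1)$ increases by exactly one, establishing (\ref{dXx}) at level $k+1$.

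Next I would obtain the decomposition $\partial_X^k g=P^g_k+P^{g,1}_k+P^{g,0}_k$ by sorting the monomials in (\ref{dXx}) according to the number of the $k$ factors $\partial_x^{\gamma_j}\partial_v^{\delta_j}V$ that coincide with $\partial V/\partial v$ (i.e., $(\gamma_j,\delta_j)=(0,1)$). The formula $P^{g,0}_k=(\partial V/\partial v)^k\partial_x^k g$ is immediate: to produce a monomial in which all $k$ $V$-factors equal $\partial V/\partial v$, every application of $\partial/\partial X$ must contribute the $(\partial V/\partial v)\partial_x$ piece rather than $-(\partial V/\partial x)\partial_v$, and its $\partial_x$ must land on $g$ rather than any previously produced $V$-factor, since $\partial_x(\partial V/\partial v)=\partial^2V/(\partial x\partial v)$ and $\partial_v(\partial V/\partial v)=\partial^2V/\partial v^2$ are both of higher order. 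The unique surviving contribution is $(\partial V/\partial v)^k\partial_x^k g$.

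The main obstacle is to verify the closed form (\ref{pg1}) for $P^{g,1}_k$. I would do this by induction. Contributions to $P^{g,1}_{k+1}$ can come only from $P^{g,0}_k$ and $P^{g,1}_k$, since applying $\partial/\partial X$ to any monomial of $P^g_k$ cannot decrease the number of non-$\partial V/\partial v$ factors already present. A direct computation of $\partial/\partial X$ on $P^{g,0}_k=(\partial V/\partial v)^k\partial_x^k g$ produces, among its single-exceptional-factor parts, $k(\partial V/\partial v)^k(\partial^2V/\partial v\partial x)\partial_x^k g-(\partial V/\partial v)^k(\partial V/\partial x)\partial_x^k\partial_v g$. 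Applying $\partial/\partial X$ to $P^{g,1}_k=(\partial V/\partial v)^{k-1}M_k$, where $M_k$ denotes the bracketed sum in (\ref{pg1}), contributes $(\partial V/\partial v)^k\partial_x M_k$ in the single-exceptional-factor part, since any $\partial_v$ or any $\partial_x$ landing on a pre-existing $\partial V/\partial v$ would create a second non-$\partial V/\partial v$ factor and send the term into $P^g_{k+1}$. The induction then reduces to the algebraic identity
\begin{equation*}
M_{k+1}-\partial_x M_k=k\,\frac{\partial^2V}{\partial v\partial x}\partial_x^k g-\frac{\partial V}{\partial x}\partial_x^k\partial_v g,
\end{equation*}
which is obtained by isolating the $j=k+1$ summand in the definition of $M_{k+1}$. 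This closes the induction, and $P^g_k$ is defined as the remainder, with each of its monomials containing at least two factors distinct from $\partial V/\partial v$ by construction.
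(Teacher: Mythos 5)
Your proposal is correct, and its skeleton coincides with the paper's: both start from the Hamiltonian identity $\partial/\partial X=\frac{\partial V}{\partial v}\partial_x-\frac{\partial V}{\partial x}\partial_v$ (via $\det\frac{\partial(X,V)}{\partial(x,v)}=1$), iterate it to get (\ref{dXx}), and sort monomials by the number of factors different from $\frac{\partial V}{\partial v}$. The one place where you genuinely diverge is the verification of the closed form (\ref{pg1}): the paper obtains it by direct enumeration of the ways to produce exactly one exceptional factor (all $k$ operators chosen as $\frac{\partial V}{\partial v}\partial_x$ with one $\partial_x$ landing on a previously produced $\frac{\partial V}{\partial v}$ --- whence the coefficient $j-1$, which counts the copies available at step $j$ --- or $k-1$ such choices plus one $-\frac{\partial V}{\partial x}\partial_v$), whereas you run a second induction on $k$, observing that $P^{g,1}_{k+1}$ can only be fed by $P^{g,0}_k$ and $P^{g,1}_k$ since the exceptional-factor count never decreases under $\partial/\partial X$, and then reduce to the telescoping identity $M_{k+1}-\partial_x M_k=k\,\frac{\partial^2 V}{\partial v\partial x}\partial_x^k g-\frac{\partial V}{\partial x}\partial_x^k\partial_v g$, which indeed holds by isolating the $j=k+1$ summand. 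I checked the algebra: the single-exceptional-factor part of $\partial_X P^{g,0}_k$ is exactly the two terms you state (the $\partial_v$ hitting $(\partial V/\partial v)^k$ produces $\partial^2V/\partial v^2$ alongside $\partial V/\partial x$ and correctly drops into $P^g_{k+1}$), and the recursion closes. Your route buys a mechanical, fully checkable verification of the coefficient $(j-1)$, which in the paper is justified only by a counting sentence; the paper's route is shorter and makes the combinatorial origin of each term more transparent. The only cosmetic gap is that you do not spell out the base case $k=1$ of the $P^{g,1}$ induction ($P^{g,1}_1=-\frac{\partial V}{\partial x}\partial_v g$, $P^g_1=0$), which is immediate.
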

\begin{proof}
First notice that when considering the LHS of (\ref{dXx}),
\begin{equation}
\frac{\partial}{\partial X} = \frac{\partial x}{\partial X}\frac{\partial}{\partial x}+\frac{\partial v}{\partial X}\frac{\partial}{\partial v}= \frac{\partial V}{\partial v}\frac{\partial}{\partial x}-\frac{\partial V}{\partial x}\frac{\partial}{\partial v},
\end{equation}
where the last equality uses the Hamiltonian structure of $\Phi_t$ defined in (\ref{traj}). Thus
\begin{equation}\label{dkg}
\frac{\partial^k g}{\partial X^k} =\left( \frac{\partial V}{\partial v}\frac{\partial}{\partial x}-\frac{\partial V}{\partial x}\frac{\partial}{\partial v}\right)^k g.
\end{equation}
This gives the structure on the RHS of (\ref{dXx}), since each time in a monomial, one factor gets a $x$ or $v$ derivative, and the whole monomial is multiplied by $\frac{\partial V}{\partial v}$ or $\frac{\partial V}{\partial x}$. The relation $\alpha+\beta \ge 1$ is because at the beginning the derivative has to hit $g$.

It is clear that if all the factors in the product $\prod_{j=1}^k \partial_x^{\gamma_j}\partial_v^{\delta_j} V$ are $\frac{\partial V}{\partial v}$, then the only possibility is to choose $ \frac{\partial V}{\partial v}\frac{\partial}{\partial x}$ in all of the $k$ operators in (\ref{dkg}) and the $x$-derivatives always hit $g$. This gives the term $P^{g,0}_k = (\frac{\partial V}{\partial v})^k\partial_x^k g$. 

For the terms with all but one factor in the product not being $\frac{\partial V}{\partial v}$, there are two possibilities: to choose $ \frac{\partial V}{\partial v}\frac{\partial}{\partial x}$ in all the $k$ operators in (\ref{dkg}) with one derivative not hitting $g$ or the factor not being $\frac{\partial V}{\partial v}$; or to choose $ \frac{\partial V}{\partial v}\frac{\partial}{\partial x}$ for $k-1$ operators and $\frac{\partial V}{\partial x}\frac{\partial}{\partial v}$ for one operator with all derivatives hitting $g$ or the factor not being $\frac{\partial V}{\partial v}$. This gives
\begin{equation}
P^{g,1}_k = \sum_{j=1}^k(j-1)\left(\frac{\partial V}{\partial v}\right)^{k-1}\partial_x^{k-j}\left(\frac{\partial^2 V}{\partial v\partial x}\partial_x^{j-1} g\right) - \sum_{j=1}^{k}\left(\frac{\partial V}{\partial v}\right)^{k-1}\partial_x^{k-j}\left(\frac{\partial V}{\partial x}\partial_x^{j-1} \partial_v g\right),
\end{equation}
where the first term is the first possibility with the $j$-th derivative not hitting $g$, and the second term is the second possibility with the $j$-th operator chosen as $\frac{\partial V}{\partial x}\frac{\partial}{\partial v}$. This gives (\ref{pg1}).
\end{proof}

\subsection{Nonlinear estimate in $\opC_{a,t_0,k}$}
We prove a nonlinear estimate in the spaces $\opC_{a,t_0,k}$ with various $k$ values:
\begin{lemma}\label{lem_nl}
Let $k,k_1,\dots,k_n,\,n\ge 2$ be nonnegative integers, and $f_i \in \opC_{a,t_0,k_i},\,i=1,\dots,n$. Then
\begin{equation}
\left\|\prod_{i=1}^n f_i\right\|_{a,t_0,k} \le C\prod_{i=1}^n \|f_i\|_{a,t_0,k_i},
\end{equation}
with $C=t_0^{\sum k_i - k}e^{-(n-1)at_0}$ if $t_0\ge t_1:=\frac{\sum k_i - k}{(n-1)a}$, and $C=t_1^{\sum k_i - k}e^{-(n-1)at_1}$ otherwise.
\end{lemma}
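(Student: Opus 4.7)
The plan is to unfold the definition of the weighted norm and reduce the claim to optimizing a single scalar function. From $f_i\in\opC_{a,t_0,k_i}$ we have the pointwise bound
\begin{equation}
|f_i(\cdot,t)|\le \|f_i\|_{a,t_0,k_i}\,t^{k_i}e^{-at},\qquad t\ge t_0,
\end{equation}
so multiplying for $i=1,\dots,n$ and inserting the weight $t^{-k}e^{at}$ that defines $\|\cdot\|_{a,t_0,k}$ gives
\begin{equation}
t^{-k}e^{at}\left|\prod_{i=1}^n f_i(\cdot,t)\right|\le \left(\prod_{i=1}^n \|f_i\|_{a,t_0,k_i}\right)\phi(t),
\end{equation}
where $\phi(t):=t^{m}e^{-bt}$ with $m:=\sum_i k_i-k\in\mathbb{Z}$ (possibly nonpositive) and $b:=(n-1)a>0$. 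Taking $\sup_{t\ge t_0}$ on the left reduces the whole estimate to identifying $\sup_{t\ge t_0}\phi(t)$, and I claim this supremum is exactly the constant $C$ written in the statement.

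Next I would carry out the elementary calculus. For $m\le 0$, the function $\phi$ is strictly decreasing on $(0,\infty)$, so the supremum is attained at the left endpoint $t_0$; note also that in this case $t_1=m/b\le 0<t_0$, so we are automatically in the branch $t_0\ge t_1$ and the supremum equals $t_0^{m}e^{-bt_0}$, matching the formula. For $m>0$, differentiation gives $\phi'(t)=t^{m-1}e^{-bt}(m-bt)$, so $\phi$ increases on $(0,t_1)$ and decreases on $(t_1,\infty)$ with $t_1=m/b=\tfrac{\sum k_i-k}{(n-1)a}$. Hence on $[t_0,\infty)$ the maximum is $\phi(t_0)=t_0^{m}e^{-bt_0}$ if $t_0\ge t_1$, and $\phi(t_1)=t_1^{m}e^{-bt_1}$ otherwise. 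Substituting back produces exactly the two-case constant $C$ stated in the lemma.

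There is really no conceptual obstacle here: the lemma is a bookkeeping statement converting products in the $\opC_{a,t_0,k}$ spaces into a single such space, and the only content is locating the maximum of $t^m e^{-bt}$. The mildly delicate point worth checking is the sign of $m$ (since $k$ can exceed $\sum k_i$), and confirming that when $m\le 0$ the definition of $t_1$ still places us in the correct branch so that the formula $C=t_0^{\sum k_i-k}e^{-(n-1)at_0}$ remains meaningful; the argument above handles this cleanly. I would therefore present the proof as the two-line chain of inequalities followed by the short case analysis on $m$ and $t_0$ versus $t_1$.
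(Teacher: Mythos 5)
Your proof is correct and follows essentially the same route as the paper: factor the weights out of the product, reduce everything to $\sup_{t\ge t_0}t^{\sum k_i-k}e^{-(n-1)at}$, and locate the maximum of $t^m e^{-bt}$ by the monotonicity on either side of $t_1=m/b$. Your explicit treatment of the case $\sum k_i-k\le 0$ (where the function is monotone decreasing and one automatically lands in the $t_0\ge t_1$ branch) is a small point the paper leaves implicit, but otherwise the arguments coincide.
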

\begin{proof}
\begin{equation}
\begin{split}
\left\|\prod_{i=1}^n f_i\right\|_{a,t_0,k} = & \sup_{t\ge t_0} t^{-k}e^{-at}\prod_{i=1}^n f_i =  \sup_{t\ge t_0}t^{\sum k_i - k}e^{-(n-1)at}\prod_{i=1}^n ( t^{-k_i}e^{-at}f_i) \\ \le & \prod_{i=1}^n \|f_i\|_{a,t_0,k_i} \sup_{t\ge t_0}t^{\sum k_i - k}e^{-(n-1)at}.
\end{split}
\end{equation}
The function $\phi(t)=\sup_{t\ge t_0}t^{\sum k_i - k}e^{-(n-1)at}$ attains its maximum in $[0,\infty)$ at $t_1=\frac{\sum k_i - k}{(n-1)a}$, and is monotone in $[0,t_1]$ and $[t_1,\infty)$ respectively. Thus in the case $t_1\le t_0$, the maximum in $[t_0,\infty)$ is attained at $t=t_0$, otherwise at $t=t_1$.
\end{proof}

\section{$x,v$-derivatives of $X,V$ and $x$-derivatives of $E$}
Equation (A.1) in \cite{CM}  gives
\begin{equation}\label{XV}
\begin{split}
& X(x,v,t) = x + vt + \int_t^\infty (s-t) E(X(x,v,s),s)  \rd{s}, \\
& V(x,v,t) = v - \int_t^\infty  E(X(x,v,s),s)  \rd{s}.
\end{split}
\end{equation}
Taking $x,v$-derivatives,
\begin{equation}\label{dxv}
\begin{split}
& \frac{\partial X}{\partial x}(x,v,t) = 1 + \int_t^\infty (s-t) \partial_x E(X(x,v,s),s)\frac{\partial X}{\partial x}(x,v,s)  \rd{s}, \\
& \frac{\partial X}{\partial v}(x,v,t) =  t + \int_t^\infty (s-t) \partial_x E(X(x,v,s),s)\frac{\partial X}{\partial v}(x,v,s)  \rd{s}, \\
& \frac{\partial V}{\partial x}(x,v,t) = - \int_t^\infty \partial_x E(X(x,v,s),s)\frac{\partial X}{\partial x}(x,v,s)  \rd{s}, \\
& \frac{\partial V}{\partial v}(x,v,t) = 1 - \int_t^\infty \partial_x E(X(x,v,s),s)\frac{\partial X}{\partial v}(x,v,s)  \rd{s}. \\
\end{split}
\end{equation}
Notice that since (\ref{traj}) is a Hamiltonian system, the Jacobian $\det(\frac{\partial(X,V)}{\partial(x,v)})=1$.
\subsection{Estimate for first order $x,v$-derivatives of $X,V$}\label{sec_XV}

For fixed $x,v$, applying Lemma \ref{lem_contra} to (\ref{dxv}) with $\Phi(s)=\partial_x E(X(s),s),\partial_x E(X(s),s)s$ and $k=1,2$ respectively, yields the estimates
\begin{equation}
\begin{split}
& \left\| \frac{\partial X}{\partial x}(x,v,\cdot)-1\right\|_{a,t_0,1}  \le \frac{8}{a^2}C_E,\\
& \left\| \frac{\partial X}{\partial v}(x,v,\cdot)-t\right\|_{a,t_0,2}  \le \frac{8}{a^2}C_E.
\end{split}
\end{equation}
Notice that these estimates are independent of $x,v$. In particular, one gets
\begin{equation}
\left\|\frac{\partial X}{\partial x}\right\|_{L^\infty} \le 1 + \frac{8}{a^2}C_Et_0e^{-at_0} \le 2,\quad \left\|\frac{\partial X}{\partial v}\right\|_{L^\infty_1} \le 1 + \frac{8}{a^2}C_Et_0e^{-at_0} \le 2,
\end{equation}
 by ({\bf A5}), and the fact that $te^{-at}$ achieves its maximum in $[t_0,\infty)$ at $t_0$, and the maximum is no greater than 1 (a consequence of ({\bf A1}), ({\bf A2})). By the third and fourth equations of (\ref{dxv}), one easily deduces that 
\begin{equation}
\begin{split}
&\left|\frac{\partial V}{\partial x}\right| \le \left\|\frac{\partial X}{\partial x}\right\|_{L^\infty}\|\partial_x E\|_{a,t_0,1}\int_t^\infty se^{-as}\rd{s} \le 4\frac{C_E}{a}te^{-at},\\ 
&  \left|\frac{\partial V}{\partial v}-1\right| \le \left\|\frac{\partial X}{\partial v}\right\|_{L^\infty_1}\|\partial_x E\|_{a,t_0,1}\int_t^\infty s^2e^{-as}\rd{s} \le 10\frac{C_E}{a}t^2e^{-at},\\ 
\end{split}
\end{equation}
which implies the estimates
\begin{equation}
\begin{split}
&\left\|\frac{\partial V}{\partial x}\right\|_{a,t_0,1}  \le \frac{4C_E}{a},\\ 
&  \left\|\frac{\partial V}{\partial v}-1\right\|_{a,t_0,2}\le \frac{10C_E}{a}.\\ 
\end{split}
\end{equation}

\subsection{Higher $x,v$-derivatives of $X,V$ and $x$-derivatives of $E$}
Since we already have the estimate
$
\|\partial_x E\|_{a,t_0,1} \le C_E
$,
 we proceed to derivatives of order at least two. We will use induction on $k\ge 2$ to prove:
\begin{align}
& \label{induc1_1} \|\partial_x^k E \|_{a,t_0,k} \le C,\\
& \label{induc1_2} \|\partial_x^i\partial_v^jX\|_{a,t_0,i+2j} \le C\quad\text{with}\quad i+j = k,\\
& \label{induc1_3} \|\partial_x^i\partial_v^jV\|_{a,t_0,i+2j} \le C\quad\text{with}\quad i+j = k,
\end{align}
for all $2 \le k \le K$. Notice that the subindex $k$ appeared in the function space behaves like: each $x$-derivative counts for one, and each $v$-derivative counts for two.

We start with
\begin{equation}
\partial_y E(y,t) = \rho-\rho_0 = \int \delta(y-X) f^*(x,v) \rd{X}\rd{V} -\rho_0,
\end{equation} 
where the second equality is due to (\ref{varchange}) with $\phi$ being a delta function. Then taking $\partial_y^{k-1}$ gives
\begin{equation}\label{dykF}
\begin{split}
\partial_y^k E(y,t) = & \int \partial_y^{k-1}\delta(y-X) f^*(x,v) \rd{X}\rd{V} \\
= & \int \delta(y-X) \partial_X^{k-1}f^*(x,v) \rd{X}\rd{V} \\
= & \int \delta(y-X) (P^{f^*}_{k-1} + P^{f^*,1}_{k-1} + P^{f^*,0}_{k-1}) \rd{X}\rd{V} ,\\
\end{split}
\end{equation} 
by Lemma \ref{lem_dXx}. With $k=2$, the above equality takes the form
\begin{equation}
\partial_y^2 \opF = \int \delta(y-X) [\partial_x f^* + (\frac{\partial V}{\partial v}-1) \partial_x f^* - \frac{\partial V}{\partial x} \partial_v f^*]\rd{X}\rd{V}.
\end{equation}
Note that $(\frac{\partial V}{\partial v}-1)$ and $\frac{\partial V}{\partial x}$ are in $\opC_{a,t_0,2}$, and $\int \delta(y-X) [|\partial_x f^*| + |\partial_v f^*|] \rd{X}\rd{V}$ is bounded in $L^\infty$ (which is a consequence of ({\bf Decay}) of $\nabla_{x,v} f^*$, with the estimate (\ref{rhoest1})). The term $\int \delta(y-X) \partial_x f^* \rd{X}\rd{V}$ is in $\opC_{a,t_0,1}$ due to ({\bf Smoothness}) and ({\bf Decay}) of $\partial_x f^*$ with (\ref{rhoest2}). This finishes the proof of (\ref{induc1_1}) for $k=2$.

Then we prove (\ref{induc1_1}) for $k> 2$ based on the induction hypothesis. Due to the $L^\infty$ estimate for $\int \delta(y-X)|\partial_x^\alpha \partial_v^\beta f^*|\rd{X}\rd{V}$ and the $\opC_{a,t_0,1}$ estimate for $\int \delta(y-X)\partial_x^\alpha \partial_v^\beta f^*\rd{X}\rd{V}$ (see the previous paragraph), it suffices to prove that in $(P^{f^*}_{k-1} + P^{f^*,1}_{k-1} + P^{f^*,0}_{k-1})$, each coefficient in front of $\partial_x^\alpha \partial_v^\beta f^*$ is in $\opC_{a,t_0,k}$ or equal to $1$.

For the terms in $P^{f^*}_{k-1}$, since at least two factors are not $\frac{\partial V}{\partial v}$ (thus in $\opC_{a,t_0,k_1}$ for some $k_1$, by induction hypothesis and the fact that $\|\frac{\partial V}{\partial x}\|_{a,t_0,1}\le C$) and all other factors are in $L^\infty$, such term is in $\opC_{a,t_0,k}$ by Lemma \ref{lem_nl}. 

For the term $P^{f^*,1}_{k-1}$ given by
\begin{equation}
P^{f^*,1}_{k-1} = \left(\frac{\partial V}{\partial v}\right)^{k-2}\sum_{j=1}^{k-1}\partial_x^{k-1-j}\left[(j-1)\left(\frac{\partial^2 V}{\partial v\partial x}\partial_x^{j-1} f^*\right) - \left(\frac{\partial V}{\partial x}\partial_x^{j-1} \partial_v f^*\right)\right],
\end{equation}
the order of derivatives on $V$ inside the summation is at most $k-2$ times in $x$ and once in $v$. By induction hypothesis and the fact that $\|\frac{\partial V}{\partial x}\|_{a,t_0,1}\le C$ (where each $x$-derivative counts for one and $v$ for two on the function space subindex), the coefficients in $P^{f^*,1}_{k-1}$ are in $\opC_{a,t_0,k}$.

For the term 
\begin{equation}
P^{f^*,0}_{k-1}=\left(\frac{\partial V}{\partial v}\right)^{k-1}\partial_x^{k-1} f^*=\left[1 + \left(\frac{\partial V}{\partial v}-1\right)\sum_{j=0}^{k-2}\left(\frac{\partial V}{\partial v}\right)^j\right]\partial_x^{k-1} f^*,
\end{equation} 
its coefficient is a constant 1 plus terms in $\opC_{a,t_0,2}$. This finishes the proof of $\partial_y^k E \in \opC_{a,t_0,k}$.

Finally we prove (\ref{induc1_2}) and (\ref{induc1_3}) based on the induction hypothesis and (\ref{induc1_1}). Taking $\partial_x^i\partial_v^j$ (with $i+j=k\ge 2$) on (\ref{XV}) gives
\begin{equation}\label{dijX}
\begin{split}
& \partial_x^i\partial_v^jX =  \int_t^\infty (s-t) \left[\partial_xE(X(x,v,s),s)\partial_x^i\partial_v^j X(s) + \dots + \partial_x^k E \left(\frac{\partial X}{\partial x}\right)^i\left(\frac{\partial X}{\partial v}\right)^j \right]  \rd{s}, \\
& \partial_x^i\partial_v^jV = \int_t^\infty  \left[\partial_xE(X(x,v,s),s)\partial_x^i\partial_v^j X(s) + \dots + \partial_x^k E \left(\frac{\partial X}{\partial x}\right)^i\left(\frac{\partial X}{\partial v}\right)^j \right]   \rd{s},
\end{split}
\end{equation}
where all the omitted source terms consist of one derivative of $E$ (of order at most $k-1$) multiplied by some $x,v$-derivatives of $X$. In the first equation, all the source terms are in $\opC_{a,t_0,k+j}$ since: $\partial_x^i E$ is in $\opC_{a,t_0,i}$
 for $1\le i \le k$; all the derivatives of $X$ appeared are in $L^\infty$ except the term $\frac{\partial X}{\partial v}$ which is in $L^\infty_1$; $\frac{\partial X}{\partial v}$ has power at most $j$. Then one concludes that $ \partial_x^i\partial_v^jX$ is in 
$\opC_{a,t_0,i+2j}$ by Lemma \ref{lem_contra} with index $i+2j \le 2K$. Using this one obtains that $ \partial_x^i\partial_v^j V$ is in 
$\opC_{a,t_0,i+2j}$.

\section{Proof of Theorem \ref{thm}: estimates for $z$-derivatives}

Now we start estimating $\partial_z^k E$ where $E(z)$ is the fixed point of $\opF$. We use induction on $k$ to prove
\begin{align}
& \label{induc2_1} \|\partial_x^i\partial_v^j\partial_z^k X\|_{a,t_0,i+2j} \le C,\quad\text{for all}\quad i,j\ge 0,\quad i+j+k \le K, \quad\text{except}\quad k=0,\,i+j\le 1,\\
& \label{induc2_2} \|\partial_x^l\partial_z^k E\|_{a,t_0,l} \le C,\quad\text{for all}\quad l\ge 0, \quad l+k \le K,
\end{align}
for $k=0,\dots,K$, where the constants $C$ may depend on $a,a_1,a_2,t_0$ and the derivative indices. In particular, this will imply Theorem \ref{thm}.

The case $k=0$ is already proved, so we will assume $k\ge 1$ and prove (\ref{induc2_1}) and (\ref{induc2_2}) based on the induction hypothesis. For a fixed $k$, we will use induction on $l=i+j$. We first prove the case $l=0$, then prove the case $l\ge 1$ based on the induction hypothesis (on $l$).

\subsection{Case $l=0$: estimate for $\partial_z^k X$ and $\partial_z^k E$}

We start by estimating $\partial_z^k X$. Taking $\partial_z^k$ on (\ref{XV}),
\begin{equation}\label{dzkX}
\partial_z^k X(t) = \int_t^\infty (s-t)[\partial_x E(X(s),s)\partial_z^k X(s) + \partial_z^k E + P^E_k)]\rd{s},
\end{equation}
by Lemma \ref{lem_dzE}. By the induction hypothesis, each monomial in $P^E_k$ is a product of factors bounded in $L^\infty$, with at least one factor ($z$-derivative of $X$ of order between 1 and $k-1$) bounded in $\opC_{a,t_0}$. Thus $\|P^E_k\|_{a,t_0} \le C$. By Lemma \ref{lem_contra} with '$k=0$', we get
\begin{equation}\label{estdzkX}
\|\partial_z^k X\|_{a,t_0}  \le  \frac{4}{a^2-80a_2} \|\partial_z^k E\|_{a,t_0} + C,
\end{equation}
where $C$ comes from $P^E_k$. 

Then we estimate $\partial_z^k E$. Taking $\partial_z^k$ on (\ref{F}), using Lemma \ref{lem_dzB},
\begin{equation}\label{dzkF}
\begin{split}
\partial_z^k E(y,t) = & \int [-B'(y-X(x,v,t,z))\partial_z^kX(x,v,t,z) f^*(x,v) + P^B_k(B,f^*,X)] \rd{x}\rd{v} \\
= & \frac{1}{2\pi}\int \partial_z^k Xf^*(x,v) \rd{x}\rd{v} - \int \delta(y-X)\partial_z^k Xf^*(x,v) \rd{x}\rd{v}+ \int P^B_k\rd{x}\rd{v}\\
= & I_1 + I_2 + S.\\
\end{split}
\end{equation}
We estimate $I_1$ by
\begin{equation}
|I_1| \le \frac{1}{2\pi}\int f^*\rd{x}\rd{v} \|\partial_z^k X\|_{a,t_0}e^{-at} \le \frac{8a_2}{a^2-80a_2} \|\partial_z^k E\|_{a,t_0}e^{-at} + Ce^{-at},
\end{equation}
by (\ref{estdzkX}) and ({\bf Decay}) of $f^*$, and estimate $I_2$ by
\begin{equation}
\begin{split}
|I_2| \le & \|\partial_z^k X\|_{a,t_0}e^{-at}\int \delta(y-X)f(X,V,t) \rd{X}\rd{V} \le \left(\frac{4}{a^2-80a_2} \|\partial_z^k E\|_{a,t_0} + C\right)e^{-at}\rho(y,t) \\  \le & \frac{40a_2}{a^2-80a_2} \|\partial_z^k E\|_{a,t_0}e^{-at} + Ce^{-at},
\end{split}
 \end{equation}
by (\ref{estdzkX}) and (\ref{rhoest1}) applied to $f$. These estimates imply that
\begin{equation}\label{dzEest}
\|\partial_z^k E-S\|_{a,t_0} \le \frac{48a_2}{a^2-80a_2} \|\partial_z^k E\|_{a,t_0} + C \le \frac{1}{2} \|\partial_z^k E\|_{a,t_0} + C.
\end{equation}

Now we estimate $S$ in $\opC_{a,t_0}$. We will do this for each monomial appeared in $P^B_k$. By Lemma \ref{lem_dzB}, such a monomial has the form $\partial_z^\beta f^*\partial_x^\alpha B \prod_{j=1}^\alpha \partial_z^{\gamma_j+1}X$. Thus the corresponding integral takes the form (where we suppress the dependence on $t$)
\begin{equation}
\begin{split}
 & \int (\partial_x^\alpha B)(y-X) \prod_{j=1}^\alpha \partial_z^{\gamma_j+1}X \partial_z^\beta f^*(x,v)\rd{x}\rd{v} \\
= & \int (\partial_x^\alpha B)(y-X) \prod_{j=1}^\alpha (\partial_z^{\gamma_j+1}X)(x(X,V),v(X,V)) \partial_z^\beta f^*(x(X,V),v(X,V))\rd{X}\rd{V} \\
= & \int B(y-X) \frac{\partial^\alpha}{\partial X^\alpha}\left[\prod_{j=1}^\alpha (\partial_z^{\gamma_j+1}X)(x(X,V),v(X,V)) \partial_z^\beta f^*(x(X,V),v(X,V))\right]\rd{X}\rd{V}. \\
\end{split}
\end{equation}
This expression can be further expanded by using the product rule and Lemma \ref{lem_dXx}. 

If $\alpha\ge 2$, there are at least two factors being the derivatives of $X$ (with the order in $z$ between 1 and $k-1$, thus in $\opC_{a,t_0,k_1}$ for some $k_1$, after taking $X$-derivatives), while all other terms are in $L^\infty$. Thus the term is in $\opC_{a,t_0}$ by Lemma \ref{lem_nl}, since $\|B\|_{L^\infty}$ and $\|\partial_x^\gamma\partial_v^\delta\partial_z^\beta f^*(x,v)\|_{L^1}$ are finite. 

Thus the only nontrivial terms are those with $\alpha=1$. Such term has the form
\begin{equation}\label{alpha1}
\int (\delta(y-X)-\frac{1}{2\pi}) \partial_z^{\gamma+1} X \partial_z^\beta f^*(x,v)\rd{X}\rd{V},
\end{equation}
with $\gamma+\beta = k-1,\,\beta\ge 1$. By induction hypothesis $\partial_z^{\gamma+1}X\in \opC_{a,t_0}$. (\ref{rhoest1}) applied to $|\partial_z^\beta f^*|$ gives $\int (\delta(y-X)-\frac{1}{2\pi}) |\partial_z^\beta f^*(x,v)|\rd{X}\rd{V}\in L^\infty$, and thus (\ref{alpha1}) is in $\opC_{a,t_0}$. This finishes the induction for the case $l=0$, in view of (\ref{dzEest}) and (\ref{estdzkX}).
\subsection{Case $l\ge 1$: estimate for $x,v,z$-derivatives}
Taking $\partial_z^k$ on the first equation of (\ref{dijX}),
\begin{equation}\label{dX}
\begin{split}
\partial_x^i\partial_v^j\partial_z^k X(t) = &  \int_t^\infty (s-t)[ \partial_x E(X(s),s)\partial_x^i\partial_v^j\partial_z^k X(s) \\ & + \partial_x^l \partial_z^k E(X(s),s)\left(\frac{\partial X}{\partial x}\right)^i\left(\frac{\partial X}{\partial v}\right)^j + \dots]\rd{s}, \\
\end{split}
\end{equation}
where each factor in the omitted terms has $z$-derivatives of order at most $k-1$, or $x,v$-derivatives of total order at most $l-1$, thus all these factors can be controlled  by the induction hypothesis and the estimates for the $x,v$-derivatives of $X,V$ (see Section 4). We already know that all the terms in the bracket in the first equation of (\ref{dijX}) are in $\opC_{a,t_0,i+2j}$. According to the induction hypothesis, taking $z$-derivatives on the $x$-derivatives of $E$ and $x,v$-derivatives of $X$ does not make its decay property worse, except in the case when the $z$-derivative hits the $X$ inside $E$. In this exceptional case, there is an extra factor $\partial_z X\in \opC_{a,t_0}$ coming out. Then in this term there are two factors in $\opC_{a,t_0,k_1}$ for some $k_1$, namely, $\partial_z X$ and a derivative of $E$. Thus this term is in $\opC_{a,t_0}$ in view of Lemma \ref{lem_nl}. Therefore all the omitted terms in (\ref{dX}) are also in $\opC_{a,t_0,i+2j}$.
Thus Lemma \ref{lem_contra} with parameter '$k=l+j$' gives the estimates
\begin{equation}\label{dijkX}
\|\partial_x^i\partial_v^j\partial_z^k X\|_{a,t_0,l+j} \le 8\left(\left\|\frac{\partial X}{\partial x}\right\|_{L^\infty}^i\left\|\frac{\partial X}{\partial v}\right\|_{L^\infty_1}^j\|\partial_x^l\partial_z^k E\|_{a,t_0,l} + C_1\right),
\end{equation}
where the constant $C_1$ comes from the omitted terms. 

Then taking $\partial_y^l$ on (\ref{dzkF}), we have
\begin{equation}
\partial_y^l \partial_z^k E(y,t) = \partial_y^l I_1 + \partial_y^l I_2 + \partial_y^l S.
\end{equation}
It is clear that $\partial_y^l I_1 = 0$, and the main task is to estimate $\partial_y^l I_2$ and $\partial_y^l S$. We will show that $\|\partial_y^l I_2\|_{a,t_0,l} \le \frac{1}{2}\|\partial_x^l \partial_z^k E\|_{a,t_0,l} + C$, and $\|\partial_y^l S\|_{a,t_0,l} \le C$, which implies (\ref{induc2_2}) by contraction argument, and then (\ref{induc2_1}) follows from (\ref{dijkX}).
\begin{equation}\label{final0}
\begin{split}
\partial_y^l I_2 = &  \int \delta^{(l)}(y-X)\partial_z^k Xf^*(x,v) \rd{x}\rd{v} \\
 = &  \int \delta^{(l)}(y-X)(\partial_z^k X)(x,v)f^*(x,v) \rd{X}\rd{V}\\
= &  \int \delta(y-X)\partial_X^l[(\partial_z^k X)(x,v)f^*(x,v)] \rd{X}\rd{V}\\
= &  \int \delta(y-X)[\partial_X^l(\partial_z^k X)f^* + S_1] \rd{X}\rd{V}\\
= &  \int \delta(y-X)\left[\sum_{i+j=l}{l \choose i}\left(\frac{\partial V}{\partial v}\right)^i\left(-\frac{\partial V}{\partial x}\right)^j(\partial_x^i \partial_v^j\partial_z^k X)f^* + S_1+S_2\right] \rd{X}\rd{V}.\\
\end{split}
\end{equation}
Here the term $S_1$ contains all terms where there is at least one $X$-derivative hitting $f^*$, and $S_2$ contains all terms where at least one derivative in $\partial_X^l=(\frac{\partial V}{\partial v}\frac{\partial}{\partial x}-\frac{\partial V}{\partial x}\frac{\partial}{\partial v})^l$ hitting $\frac{\partial V}{\partial v}$ or $\frac{\partial V}{\partial x} $ in itself. Later $S_1$ and $S_2$ will be estimated by the induction hypothesis. The self-interacting term is estimated by 
\begin{equation}\label{final}
\begin{split}
& \left\|\sum_{i+j=l}{l \choose i}\left(\frac{\partial V}{\partial v}\right)^i\left(-\frac{\partial V}{\partial x}\right)^j(\partial_x^i \partial_v^j\partial_z^k X)\right\|_{a,t_0,l} \\ 
\le & \sum_{i+j=l}{l \choose i}[t_0^2e^{-at_0}]^j\left\|\frac{\partial V}{\partial v}\right\|_{L^\infty}^i \left\|\frac{\partial V}{\partial x}\right\|_{a,t_0,1}^j\|\partial_x^i \partial_v^j\partial_z^k X\|_{a,t_0,l+j} \\
\le & 8\sum_{i+j=l}{l \choose i}[t_0^2e^{-at_0}]^j\left\|\frac{\partial V}{\partial v}\right\|_{L^\infty}^i \left\|\frac{\partial V}{\partial x}\right\|_{a,t_0,1}^j \left\|\frac{\partial X}{\partial x}\right\|_{L^\infty}^i\left\|\frac{\partial X}{\partial v}\right\|_{L^\infty_1}^j\|\partial_x^l\partial_z^k E\|_{a,t_0,l} + C \\
\le & 8\left[ \left\|\frac{\partial V}{\partial v}\right\|_{L^\infty}\left\|\frac{\partial X}{\partial x}\right\|_{L^\infty} +    t_0^2e^{-at_0} \left\|\frac{\partial V}{\partial x} \right\|_{a,t_0,1}\left\|\frac{\partial X}{\partial v}\right\|_{L^\infty_1}\right]^l\|\partial_x^l\partial_z^k E\|_{a,t_0,l} + C \\
:= & 8 A^l \|\partial_x^l\partial_z^k E\|_{a,t_0,l} + C.
\end{split}
\end{equation}
Here the constant $[t_0^2e^{-at_0}]^j $ appeared in second line comes from the embedding $(\opC_{a,t_0,1})^j\times\opC_{a,t_0,l+j}\rightarrow \opC_{a,t_0,l}$ (see Lemma \ref{lem_nl}, where the condition $t_0 \ge t_1 = \frac{l+2j-l}{ja} = \frac{2}{a}$ is satisfied due to ({\bf A1}), ({\bf A2})).

Thus in order to use a contraction argument on $\partial_x^l\partial_z^k E$, one needs to require that $8A^l$ is no more than $\frac{1}{20a_2}$, in view of the fact that $\int \delta(y-X)|f^*|\rd{X}\rd{V}$ has $L^\infty$ norm at most $10a_2$ by (\ref{rhoest1}).

Now notice that all the $L^\infty$ norms in $A$ are with $t\ge t_0$. Therefore, from the $\opC_{a,t_0,1}$ and $\opC$ estimates on $X,V$ we obtained in section \ref{sec_XV}, we get
\begin{equation}
\begin{split}
& \left\|\frac{\partial X}{\partial x}\right\|_{L^\infty} \le 1 +  \frac{8}{a^2}C_Et_0e^{-at_0} , \\
& \left\|\frac{\partial V}{\partial v}\right\|_{L^\infty} \le 1 + \left\|\frac{\partial V}{\partial v} -1\right\|_{L^\infty} \le 1 + \left\|\frac{\partial V}{\partial v} -1\right\|_{a,t_0,2} t_0^2e^{-at_0} \le 1 + \frac{10C_E}{a} t_0^2e^{-at_0}  ,\\
& \left\|\frac{\partial V}{\partial x}\right\|_{a,t_0,1} \le \frac{4C_E}{a}, \\
&\left \|\frac{\partial X}{\partial v}\right\|_{L^\infty_1} \le 2 . \\
\end{split}
\end{equation}

Thus the bracket term in (\ref{final}) is at most $ 1 + \frac{50C_E}{a} t_0^2e^{-at_0}$, in view of the fact that $\frac{8C_E}{a^2} t_0e^{-at_0} \le 1$ by ({\bf A5}).
Notice that
\begin{equation}
\left(1 + \delta \right)^l \le e^{l\delta}.
\end{equation}
With $\delta = \frac{50C_E}{a} t_0^2e^{-at_0}$, 
\begin{equation}
l \delta = \frac{50C_E}{a} lt_0^2e^{-at_0} \le \frac{50C_E}{a} t_0^3e^{-at_0} \le 1,
\end{equation}
by ({\bf A3}) and $l\le K \le t_0$ (a consequence of ({\bf A2})). Thus $8A^l \le 8e \le \frac{1}{20a_2}$ by ({\bf A4}).

Next we estimate the terms $S_2$ and $S_1$. 

All terms in $S_2$ has $x,v$-derivatives in $X$ of total order at most $l-1$, thus can be controlled by the induction hypothesis. In each term, at least two factors are in $\opC_{a,t_0,k_1}$ for some $k_1$ (one is a derivative of $\partial_z^k X$, another is a $x,v$-derivative of $V$ of total order at least 2). This shows that $S_2$ is in $\opC_{a,t_0,l}$, in view of the fact that $\int \delta(y-X)| f^*|\rd{X}\rd{V}$ is in $L^\infty$.

$S_1 = \sum_{l'=0}^{l-1}{l \choose l'} \partial_X^{l'}(\partial_z^k X) \partial_X^{l-l'} f^*$. The term $\partial_X^{l'}(\partial_z^k X)$ has at least two factors in $\opC_{a,t_0,k_1}$ for some $k_1$, except the terms with all derivatives inside $\partial_X^{l'} =(\frac{\partial V}{\partial v}\frac{\partial}{\partial x}-\frac{\partial V}{\partial x}\frac{\partial}{\partial v})^{l'}$ being $\partial_x$ and hitting on $\partial_z^k X$. This term is $(\frac{\partial V}{\partial v})^{l'}\partial_x^{l'}\partial_z^k X$, which is in $\opC_{a,t_0,l'}$. This shows that $S_2$ is in $\opC_{a,t_0,l}$, in view of the fact that $\int \delta(y-X)| \partial_X^{l-l'}f^*|\rd{X}\rd{V}$ is in $L^\infty$ (which is clear since after expanding $\partial_X^{l-l'}$, all coefficients in front of $f^*$ are in $L^\infty$).

Finally, to estimate $\partial_y^l S$, we only need to treat $\partial_y^l$ of (\ref{alpha1}) for the same reason as before. This term is
\begin{equation}
\begin{split}
 \int \delta^{(l)}(y-X) \partial_z^{\gamma+1} X \partial_z^\beta f^*(x,v)\rd{X}\rd{V} 
=  \int \delta(y-X) \partial_X^l[\partial_z^{\gamma+1} X \partial_z^\beta f^*(x,v)]\rd{X}\rd{V}.
\end{split}
\end{equation}
This term is bounded in $\opC_{a,t_0,l}$ in the same way as $\partial_y^l I_2$ is bounded (with $k$ replaced by $\gamma+1 < k$, and $f^*$ replaced by $\partial_z^\beta f^*$), while here we use the induction hypothesis instead of self-interacting estimates, since the involved $z$-derivative is at most $k-1$.

\section{Proof of Corollary \ref{cor}}

We first estimate the derivatives of $V$. Take $i,j,k$ such that $k=0,\,i+j\le 1$ does not hold. Taking $\partial_z^k$ of the second equation of (\ref{dijX}), one gets (similar to (\ref{dX}))
\begin{equation}
\begin{split}
\partial_x^i\partial_v^j\partial_z^k V(t) =   -\int_t^\infty \left[ \partial_x E(X(s),s)\partial_x^i\partial_v^j\partial_z^k X(s) + \partial_x^l \partial_z^k E(X(s),s)\left(\frac{\partial X}{\partial x}\right)^i\left(\frac{\partial X}{\partial v}\right)^j + \dots\right]\rd{s}, \\
\end{split}
\end{equation}
where the omitted term is in $\opC_{a,t_0,i+2j}$ as in (\ref{dX}). By Section 5, $\partial_x^i\partial_v^j\partial_z^k X$ is in $\opC_{a,t_0,i+2j}$ and $\partial_x^l \partial_z^k E(X(s),s)$ is in $\opC_{a,t_0,i+j}$. It follows that the integrand is in $\opC_{a,t_0,i+2j}$, and one concludes that $\partial_x^i\partial_v^j\partial_z^k V \in \opC_{a,t_0,i+2j}$ by Lemma \ref{lem_op1}.

$f(x,v,t,z)$, the solution to (\ref{VP}) given by \cite{CM} with time-asymptotic profile $f^*(x,v,z)$, is given by the implicit form (\ref{deff}). Using (\ref{dkg}) and its variant for $V$-derivatives, one obtains
\begin{equation}\label{dxvf}
\partial_x^i\partial_v^j f = \left(\frac{\partial V}{\partial v}\partial_x  - \frac{\partial V}{\partial x}\partial_v\right)^i\left(-\frac{\partial X}{\partial v}\partial_x  + \frac{\partial X}{\partial x}\partial_v\right)^j f^*,
\end{equation}
where $f$ and its derivatives are evaluated at $(X,V,t,z)$, $(X,V)$ and their derivatives are evaluated at $(x,v,t,z)$, and $f^*$ and its derivatives are evaluated at $(x,v,z)$. 

Taking $\partial_z^k$ of the RHS of  (\ref{dxvf}), due to the fact that all derivatives of $X,V$ of order at least two are in $\opC_{a,t_0,k_1}$ for some $k_1$, the worst term is when all derivatives hit $f^*$, and the $X,V$ factors are chosen as $ \left(\frac{\partial V}{\partial v}\right)^i \left(\frac{\partial X}{\partial v}\right)^j$. This term is in $L^\infty_j$, and thus $\frac{\partial^k}{\partial z^k}[\partial_x^i\partial_v^j f(X(z),V(z),t,z)]$ is in $L^\infty_j$.

Taking $\partial_z^k$ of $(\partial_x^i\partial_v^j f)(X(z),V(z),t,z) $, using a variant of Lemma \ref{lem_dzE}, one obtains
\begin{equation}\label{df}
\frac{\partial^k}{\partial z^k}[\partial_x^i\partial_v^j f(X(z),V(z),t,z)] = \sum_{\alpha_1+\alpha_2+\beta+\gamma_1+\gamma_2= k} \left[ \partial_x^{i+\alpha_1}\partial_v^{j+\alpha_2}\partial_z^\beta f \sum c\prod_{l=1}^{\alpha_1} \partial_z^{\gamma_{1,l}+1}X\prod_{l=1}^{\alpha_2} \partial_z^{\gamma_{2,l}+1}V \right],
\end{equation}
where the second summation is taken over $\gamma_{1,1}+\dots+\gamma_{1,\alpha_1}=\gamma_1,\,\gamma_{2,1}+\dots+\gamma_{2,\alpha_2}=\gamma_2$. On the RHS there is one single term $\partial_x^i\partial_v^j\partial_z^k f$ with $\beta=k$, and all other terms have $\beta<k$. Thus one can write
\begin{equation}\label{df1}
\begin{split}
\partial_x^i\partial_v^j\partial_z^k f = &  \frac{\partial^k}{\partial z^k}[\partial_x^i\partial_v^j f(X(z),V(z),t,z)] \\ & - \sum_{\alpha_1+\alpha_2+\beta+\gamma_1+\gamma_2= k, \,\beta<k} \left[ \partial_x^{i+\alpha_1}\partial_v^{j+\alpha_2}\partial_z^\beta f \sum c\prod_{l=1}^{\alpha_1} \partial_z^{\gamma_{1,l}+1}X\prod_{l=1}^{\alpha_2} \partial_z^{\gamma_{2,l}+1}V \right].
\end{split}
\end{equation}
In case $\beta<k$, one has $\alpha_1+\alpha_2 \ge 1$, thus there is at least one factor which is a $z$-derivative of $X$ or $V$. Such factor is in $\opC_{a,t_0}$. Thus by induction on $k$, it is easy to prove that $\partial_x^i\partial_v^j\partial_z^k f$ is in $L^\infty_j$, using the fact that $\frac{\partial^k}{\partial z^k}[\partial_x^i\partial_v^j f(X(z),V(z),t,z)] $ is in this space.

Now let $i=j=0$ in (\ref{df1}). The first term on the RHS becomes $\partial_z^k f^*(x,v,z)$. For the terms in the summation, if $\alpha_1+\alpha_2\ge 2$, then there are two factors in $\opC_{a,t_0}$, while others are no worse than $L^\infty_{k_1}$ for some $k_1$, and thus the term is in $\opC_{a,t_0}$ in view of Lemma \ref{lem_nl}. If $\alpha_1+\alpha_2=1$, then the $f$ factor is $\partial_x\partial_z^\beta f$ or $\partial_v\partial_z^\beta f$, both of which are in $L^\infty_1$. Together with one $X$ or $V$ factor in $\opC_{a,t_0}$, the term is in $\opC_{a,t_0,1}$. Thus the whole summation is in $\opC_{a,t_0,1}$, and we obtain 
\begin{equation}\label{ff1}
\partial_z^k f(X,V,t,z) - \partial_z^k f^*(x,v,z) \in \opC_{a,t_0,1}.
\end{equation}

Using the fact that $|v-V|\le Ce^{-at}$, $|x-(X-Vt)| \le Cte^{-at}$, and the assumption that $\nabla_{x,v}\partial_z^k f^*$ is bounded in $L^\infty$, we obtain
\begin{equation}
\partial_z^k f^*(x,v,z) - \partial_z^k f^*(X-Vt,V,z) \in \opC_{a,t_0,1},
\end{equation}
which implies
\begin{equation}
\partial_z^k f(X,V,t,z)- \partial_z^k f^*(X-Vt,V,z) \in \opC_{a,t_0,1},
\end{equation}
and the conclusion of Corollary \ref{cor} follows.

\section{Conclusion}
In this paper we proved that for the Vlasov-Poisson equation with random uncertain initial data, the Landau damping solution $E(t,x,z)$  given by \cite{CM} (for the deterministic problem) depends smoothly on the random variable $z$, if the time asymptotic profile $f^*(x,v,z)$ does. Our smoothness and smallness assumptions on $f^*$ are similar to those in \cite{CM}, and independent of $K$, the order of $z$-derivatives. 

To the authors knowledge, this result is the first mathematical study on the propagation of uncertainty for time-reversible nonlinear kinetic equations. It suggests that even for kinetic equations without hypocoercivity, the random space regularity may still be maintained in large time, if there are other types of damping mechanism (Landau damping for the VP equation).

In the future we may consider:
\begin{enumerate}
\item How to extend the results in~\cite{MV} and~\cite{BMM} to the case with uncertainty.
\item There are other equations for which the phase-mixing mechanism induces damping, for example, the 2D Euler equation \cite{BM}. It is interesting to see whether such damping result can be extended to the case with uncertainty. 
\end{enumerate}

\bibliographystyle{plain}

\end{document}